\newtheorem{thm}{Theorem}[section]
\newtheorem{defn}[thm]{Definition}
\newtheorem{cor}[thm]{Corollary}
\newtheorem{prop}[thm]{Proposition}
\newtheorem{lem}[thm]{Lemma}
\title{Limit shape of subpartition maximizing partitions}
\author{Ivan Corwin and Shalin Parekh}
\begin{document}
	
\maketitle

\begin{abstract}This is an expository note answering a question posed to us by Richard Stanley, in which we prove a limit shape theorem for partitions of $n$ which maximize the number of subpartitions. The limit shape and the growth rate of the number of subpartitions are explicit. The key ideas are to use large deviations estimates for random walks, together with convex analysis and the Hardy-Ramanujan asymptotics. Our limit shape coincides with Vershik's limit shape for uniform random partitions.
\\
\\
\noindent \textit{This note is dedicated to Joel Lebowitz in appreciation for his tremendous and ongoing contributions to the world of statistical physics.}
\end{abstract}

\section{Maximizing the number of subpartitions} \label{sec:1}

Given a partition $\lambda = (\lambda_1 \ge ...\ge \lambda_k)$ of $n$, we can identify it with a $1$-Lipschitz function which is a finite perturbation of $|x|$ by following the Russian convention for drawing it. Specifically, start with the English convention for the Young diagram for $\lambda$ ($\lambda_1$ boxes on the top row, then $\lambda_2$ below it and so on, all justified to line up on the left) and rotate it by $135^\circ$. Then we place this rotated picture immediately adjacent to the graph of the function $x \mapsto |x|$ so that each box has unit length. This defines a $1$-Lipschitz function $g_{\lambda}(x)$ with the property that $g_{\lambda}(x) \ge |x|$ and $g_{\lambda}(x) = |x|$ for large $x$. We also define a rescaled version of $g_{\lambda}$ as $f_{\lambda}(x):= n^{-1/2}g_{\lambda}(n^{1/2}x)$ so that each box has side length $n^{-1/2}$ and area $n^{-1}$ when depicted beneath the graph of $f_{\lambda}$. In particular $\int_{\Bbb R} (f_{\lambda}(x)-|x|)dx = 1$. 
\\
\\
A \textit{subpartition} of a partition $\lambda = (\lambda_1 \ge ... \ge \lambda_k)$ is a partition $\mu = (\mu_1\ge ...\ge \mu_{\ell})$ such that $\ell \le k$ and $\mu_i \leq \lambda_i$ for all $i\le \ell$. Our main result is as follows.

\begin{thm}[Theorems \ref{10} and \ref{9}]\label{mr}
    For each $n$, let $\lambda_n$ denote a partition of $n$ which maximizes the number of subpartitions among all other partitions of $n$. Then the number of subpartitions of $\lambda_n$ grows as $e^{\pi\sqrt{2n/3}-o(\sqrt{n})}$ as $n \to \infty$. Moreover $f_{\lambda_n}$ converges uniformly as $n\to \infty$ to the function $f(x) = \frac{2\sqrt{3}}{\pi}\log\big(2\cosh(\frac{\pi}{2\sqrt{3}}x)\big). $
\end{thm}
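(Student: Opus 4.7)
The plan is to reduce the counting problem to a variational one over rescaled $1$-Lipschitz profiles, identify the unique extremizer by calculus of variations, and then pin down both the asymptotics and the limit shape by sandwiching with the Hardy--Ramanujan formula. The first step is the bijective identification of subpartitions of $\lambda$ with $1$-Lipschitz integer profiles $g_\mu$ satisfying $|x| \leq g_\mu(x) \leq g_\lambda(x)$, so that $N(\lambda) := |\{\mu \subseteq \lambda\}|$ is a count of lattice paths in the strip between $|x|$ and $g_\lambda$.

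The upper bound on the growth rate requires no information about $\lambda_n$: since every subpartition has at most $n$ boxes, $N(\lambda_n) \leq \sum_{k=0}^n p(k) \leq (n+1) p(n)$, and Hardy--Ramanujan gives $p(n) = \exp(\pi\sqrt{2n/3} + o(\sqrt n))$, hence $\log N(\lambda_n) \leq \pi\sqrt{2n/3} + o(\sqrt n)$ for any sequence of partitions.

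The core of the argument is a random-walk large-deviation estimate together with a variational analysis. Undoing the $45^\circ$ rotation so that the profile becomes a $\pm 1$ walk on the integer lattice, a blockwise Stirling estimate shows that for a fixed $1$-Lipschitz target $h \geq |x|$ the number of subpartitions whose rescaled profile is within sup-norm $\varepsilon$ of $h$ is $\exp(\sqrt n\,S(h) + o(\sqrt n))$, where, tracking the factor of $\sqrt 2$ from the rotation,
\[
S(h) \;=\; \sqrt{2} \int_{\mathbb R} H\!\left(\tfrac{1 + h'(x)}{2}\right) dx, \qquad H(p) := -p\log p - (1-p)\log(1-p).
\]
An a priori tightness estimate bounding the support of $f_{\lambda_n} - |x|$ by $O(1)$ (else $S$ is suboptimal by a fixed amount) makes the admissible set of $h$ pre-compact in sup norm and admits a polynomial $\varepsilon$-cover; summing yields $\log N(\lambda) = \sqrt n \cdot \max\{S(h) : h \text{ is } 1\text{-Lipschitz},\ |x| \leq h \leq f_\lambda\} + o(\sqrt n)$. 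The variational problem $\max_h S(h)$ subject to $|x| \leq h$, $h$ being $1$-Lipschitz, and $\int(h - |x|)\,dx \leq 1$ is solved by Euler--Lagrange with a multiplier $\beta$ for the area constraint; strict concavity of $H$ gives the unique maximizer $h'(x) = \tanh(\beta x)$, equivalently $h(x) = \beta^{-1}\log(2\cosh\beta x)$, and the area normalization forces $\beta = \pi/(2\sqrt{3})$, producing exactly the Vershik curve $f(x)$ of the statement and the value $S(f) = \pi\sqrt{2/3}$.

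The matching lower bound and the limit shape follow in parallel. For the lower bound I would take $\lambda^*_n$ to be a minor thickening of the partition of $n$ whose rescaled profile best approximates $f$; the pointwise inequality $r f(x/r) \leq f(x)$ for $r \in [0,1]$ (verified by checking that $f(y) - y f'(y) > 0$ directly from the explicit formula for $f$) shows that any typical partition $\mu$ of size $k \leq n$ whose rescaled shape in its own $k^{-1/2}$ scaling is close to $f$ does fit inside $\lambda^*_n$. Combined with Vershik's concentration theorem for uniform random partitions and Hardy--Ramanujan, this gives $N(\lambda^*_n) \geq (1 - o(1)) p(n) = \exp(\pi\sqrt{2n/3} - o(\sqrt n))$. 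For the limit shape, suppose $f_{\lambda_n} \not\to f$; by Arzela--Ascoli extract a subsequence converging uniformly to some $f^\infty \neq f$. Since $f^\infty$ has area $1$ matching $f$, $f^\infty \geq f$ pointwise is impossible, so strict concavity forces $\max_{h \leq f^\infty} S(h) < \pi\sqrt{2/3}$; the large-deviation upper bound then yields $\log N(\lambda_n) \leq (\pi\sqrt{2/3} - c)\sqrt n$, contradicting maximality. The principal technical hurdle is making the entropy estimate uniform in the obstacle profile $f_\lambda$, i.e., proving the support tightness and executing the $\varepsilon$-discretization of the admissible $h$.
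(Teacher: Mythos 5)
Your overall architecture matches the paper's: the same entropy functional (your $S(h)=\sqrt2\int H\bigl(\tfrac{1+h'}{2}\bigr)$ is exactly $\sqrt{2}\int(\log 2-\Lambda^*)\circ h'$, i.e.\ $\sqrt 2\,F(h)$ in the paper's notation), the same Euler--Lagrange identification of $\log\cosh$, and the same use of Hardy--Ramanujan to pin down the multiplier and the value $\pi\sqrt{2/3}$ without having to rule out a finite-support maximizer by hand. The genuine divergence is the lower bound: you invoke Vershik's concentration theorem for uniform partitions to show that most partitions of size $(1-\epsilon)n$ fit under a near-optimal profile, which is the route the paper only sketches in its introduction (via Dembo--Vershik--Zeitouni); the paper's actual proof instead builds the lower bound from Mogulskii's sample-path LDP, counting pairs of conditioned walks glued by a Cauchy--Schwarz argument at the midpoint. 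Your route is viable but outsources nontrivial tail control to the concentration theorem (uniform, not just locally uniform, closeness to the Vershik curve is needed to guarantee containment), which is precisely what the paper's self-contained approach avoids. Two small corrections to that step: a partition of exactly $n$ contains only itself among partitions of $n$, so the bound $N(\lambda_n^*)\ge(1-o(1))p(n)$ cannot hold literally; you must sum $p(k)$ over $k\le(1-\epsilon)n$ and send $\epsilon\to0$, which still yields $e^{\pi\sqrt{2n/3}-o(\sqrt n)}$.

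The one claim that would actually fail is your proposed a priori estimate that the support of $f_{\lambda_n}-|x|$ is $O(1)$. The limit shape satisfies $f(x)-|x|=\tfrac{2\sqrt3}{\pi}\log(1+e^{-\pi|x|/\sqrt3})>0$ for all $x$, so the maximizers have largest part growing strictly faster than $\sqrt n$ and the rescaled supports are unbounded; the paper remarks on exactly this point after Theorem \ref{9}. Fortunately you do not need bounded support: pre-compactness in sup norm is automatic from $|x|\le h\le\sqrt{x^2+2}$ for area-at-most-one profiles. What you do need --- both to make your $\varepsilon$-net upper bound close on the non-compact line and to justify ``$f^\infty$ has area $1$'' in your limit-shape argument (mass could a priori escape to infinity along the subsequence) --- is a uniform tail bound of the form $\int_{|x|>A}\phi\circ h'<\epsilon$ for all admissible $h$, together with non-escape of area. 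The paper's Proposition \ref{3} supplies exactly this, via the weighted summability $\sum_n n\bigl(1-f(n)+f(n-1)\bigr)\le 3$ combined with $\phi(x)\le C(1-|x|)^{2/3}$, and Theorem \ref{10} then recovers the area statement by passing through lower convex envelopes and Fatou. So you have located the principal technical hurdle correctly but written down the wrong form of the estimate; replacing ``bounded support'' by ``uniformly small tail entropy and tail area'' makes your argument sound.
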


\vspace{0.1 in}

The limit shape here is known as \textit{Vershik's curve} and was first described as the limit of uniformly sampled partitions of $n$ in \cite{Ver96}. Our result can be shown by using \textit{large-deviations estimates} for uniformly sampled partitions of $n$ which were found in the follow-up paper \cite{DVZ98}. In particular, to prove Theorem \ref{mr}, first note by the Hardy-Ramanujan asymptotics that the number of subpartitions of any partition of $n$ is bounded above (up to some constant factor) by $e^{\pi \sqrt{2n/3}}$. We let $\mu_n$ be a partition of $n$ which is closest to Vershik's curve (after normalization by $\sqrt n$), among all other partitions of $n$. Fixing $\epsilon>0$, it follows from Theorem 1 of \cite{DVZ98} that for large enough $n$, ``most" partitions of $\lceil (1-\epsilon) n\rceil$ are going to be subpartitions of $\mu_n$, which means that the number of subpartitions of $\mu_n$ is bounded below by $\frac1n e^{\pi \sqrt{2(1-\epsilon)n/3} - o(\sqrt n)}.$ Since $\epsilon$ can be made arbitrarily small, this gives tight bounds on the exponential scale which can then be used (via elementary topological arguments) to show that the maximizing partitions $\lambda_n$ are very close to $\mu_n$ on the $\sqrt n$ scale, so that the $\lambda_n$ also converge to Vershik's curve. 
\\
\\
The main purpose of this note is to exposit the power of large deviations theory in this particular context of partition/subpartition problems. Specifically we are going to give a proof of Theorem \ref{mr}, which is essentially a more rigorous version of the sketch given in the preceding paragraph. However, our exposition is more self-contained and based entirely on foundational principles (specifically we do not use \cite{DVZ98}, but instead rely on the result of Mogulskii \cite{Mog92} which gives a large deviations rate function for the full sample path of a random walk with iid increments, and is arguably a central result of large deviations theory).
\\
\\
We also have the following similar result for $k$-chains of subpartitions, i.e., simply ordered sets of $k$ subpartitions. The ordering may be strict or unstrict; our results do not depend on this convention.

\begin{thm}[Section \ref{sec:6}]\label{1.2}
Let $k\ge 1$, and let $\lambda_n$ denote a partition of $n$ which maximizes the number of $k$-chains of subpartitions, among all other partitions of $n$. Then the number of $k$-chains of subpartitions of $\lambda_n$ grows as $e^{k\pi\sqrt{2n/3} - o(\sqrt{n})}$ as $n \to \infty$. Furthermore $f_{\lambda_n}$ converges uniformly to the same limit shape as in Theorem \ref{mr}.
\end{thm}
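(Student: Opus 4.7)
The plan is to reduce Theorem \ref{1.2} to Theorem \ref{mr} via the elementary inequality
\[
N_k(\lambda) \;\le\; N_1(\lambda)^k,
\]
where $N_k(\lambda)$ denotes the number of $k$-chains of subpartitions of $\lambda$; this holds because a $k$-chain is in particular a $k$-tuple of subpartitions of $\lambda$, and imposing the chain condition can only reduce the count. Combined with the Hardy--Ramanujan estimate $N_1(\lambda) \le e^{\pi\sqrt{2n/3}+o(\sqrt n)}$ already used in the proof of Theorem \ref{mr}, this immediately yields the upper bound $N_k(\lambda) \le e^{k\pi\sqrt{2n/3}+o(\sqrt n)}$, valid uniformly in $\lambda \vdash n$.

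For the matching lower bound I would construct many chains inside the reference partition $\mu_n$ of Theorem \ref{mr} (the one whose rescaled profile is closest to Vershik's curve $f$). Fix $\epsilon > 0$ and build $k$-chains in $\mu_n$ from the top down: first choose $\mu^{(k)} \le \mu_n$ whose rescaled profile is close to $f$ rescaled to area $1-\epsilon$; then, given $\mu^{(j+1)}$, choose $\mu^{(j)} \le \mu^{(j+1)}$ close to $f$ rescaled to area $(1-\epsilon)^{k-j+1}$. At each step the enclosing partition from the previous step is near-Vershik at the relevant scale by construction, so the same Mogulskii-based large deviations argument that powers Theorem \ref{mr} applies and produces at least $e^{\pi\sqrt{2(1-\epsilon)^{j} n/3}-o(\sqrt n)}$ admissible choices at the $j$th step. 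Multiplying over $j=1,\ldots,k$ and then sending $\epsilon \downarrow 0$ gives $N_k(\mu_n) \ge e^{k\pi\sqrt{2n/3}-o(\sqrt n)}$, which transfers at once to any maximizer $\lambda_n$.

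The two bounds now combine to give $N_1(\lambda_n) \ge N_k(\lambda_n)^{1/k} \ge e^{\pi\sqrt{2n/3}-o(\sqrt n)}$, so $\lambda_n$ is automatically a near-maximizer for the single-subpartition problem treated in Theorem \ref{mr}. The topological/convex-analytic argument from Theorem \ref{mr} that constrains the limit shape of \emph{any} near-$N_1$-maximizer then applies verbatim and forces $f_{\lambda_n} \to f$ uniformly.

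The main obstacle will be the iterative construction in the lower bound: one must verify that every partition close to $f$ at scale $\sqrt{(1-\epsilon)^j n}$ still has abundantly many subpartitions close to $f$ at the next smaller scale, with error terms uniform in the depth $j$. This is scale-invariant and follows from the same Mogulskii rate function plus convex analysis already used in Theorem \ref{mr}, but care is required to set things up so that the $o(\sqrt n)$ errors from the $k$ steps do not accumulate into anything worse than $o(\sqrt n)$ (which is automatic because $k$ is fixed independently of $n$).
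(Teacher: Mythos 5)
Your upper bound ($N_k(\lambda)\le N_1(\lambda)^k$ combined with the Hardy--Ramanujan bound on $N_1$) and your closing step for the limit shape (from $N_1(\lambda_n)\ge N_k(\lambda_n)^{1/k}\ge e^{\pi\sqrt{2n/3}-o(\sqrt n)}$ and the bound \eqref{U}, the maximizer $\lambda_n$ is a near-maximizer for the single-subpartition functional, so the argument of Theorem \ref{10} applies verbatim) coincide with what the paper does. Your lower bound, however, takes a genuinely different route. The paper keeps all $k$ bridges at the single scale $\sqrt n$ and separates them \emph{vertically}: the $i$th bridge is confined to the band between $f_{\max}$ shifted down by $i\delta$ and by $(i+1)\delta$, so the nesting $\gamma_k\le\cdots\le\gamma_1$ is automatic, each band contributes exactly the count $e^{\sqrt{2n}\,F(f_{(i+\frac12)\delta})-o(\sqrt n)}$ from the proof of Proposition \ref{5}, and letting $\delta\downarrow 0$ gives the rate $e^{k\sqrt{2n}M(F)-o(\sqrt n)}$ with no induction. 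You instead separate the links of the chain by \emph{area}, choosing near-Vershik subpartitions at geometrically shrinking areas $(1-\epsilon)^j n$ --- essentially iterating the sketch from Section \ref{sec:1}. Both work: the product of your step counts is $e^{\pi\sqrt{2n/3}\sum_{j=1}^k(1-\epsilon)^{j/2}-o(\sqrt n)}$, which recovers the exponent $k\pi\sqrt{2n/3}$ as $\epsilon\downarrow 0$ since $k$ is fixed. The price of your route is the uniformity issue you correctly flag; it is in fact harmless, because in the Mogulskii lower bound the enclosing partition enters only through the requirement that the $\delta$-shell at the smaller scale lie beneath its profile, and this holds simultaneously for all profiles within $\eta$ of the Vershik curve once $\eta\ll\delta\ll\epsilon$, using that the area-$\alpha$ rescaling $\alpha^{1/2}f_{\max}(\alpha^{-1/2}x)$ is pointwise increasing in $\alpha$ on the support (the perspective-function monotonicity $\tfrac{d}{dt}\bigl[t f(x/t)\bigr]=f(x/t)-(x/t)f'(x/t)\ge 0$ for convex $f\ge|x|$ with $|f'|\le1$). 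The paper's single-scale bands avoid this bookkeeping entirely, which is why its Section \ref{sec:6} is only a few lines.
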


We close out this introduction by noting a few questions that may warrant further study. In some cases, there are related results though we do not attempt to make a survey of them. 
\\
\\
One natural question is to consider fluctuations around limit curves, as done in \cite{Yak99, VFY99, VY01, IO03} for instance. For the problem we have considered, this is a bit difficult to phrase since for each $n$ we expect only a few maximizing partitions. On the other hand, if we let $s(\lambda)$ denote the number of subpartitions of $\lambda$, then we may, for $\beta\geq 0$ define a measure on partitions of $n$ with probability of $\lambda$ proportional to $s(\lambda)^\beta$. When $\beta\to \infty$, this measure concentrates on those $\lambda$ which maximize $s(\lambda)$, hence our problem. When $\beta=0$, this measure reduces to the uniform measure on partitions considered by Vershik. While we expect (in particular, based on our arguments in this paper) that the limit shape does not depend on $\beta$, it would be interesting to probe the dependence of $\beta$ on the fluctuations around that shape. It might also be interesting to obtain concentration and large deviations bounds for such measures, as established in \cite{VK85, DVZ98} for instance.
\\
\\
While there are many other types of measures on partitions, one of particular importance is the Plancherel measure. This involves defining the dimension of $\lambda$ to be the number of standard Young Tableaux of that shape. In terms of subpartitions, this is the number of $n$-chains of subpartitions where we restrict that a subpartition cannot equal the partition. The Plancherel measure is then proportional to that dimension squared. For that measure, seminal and independent works of Logan-Shepp \cite{LS77} and Vershik-Kerov \cite{VK77} established a limit shape as $n\to \infty$ now known as the Logan-Shepp-Vershik-Kerov (LSVK) curve. This limit curve is not the same as Vershik's curve. Hence, a natural question is to find a way to interpolate the model so as to find limit shapes which likewise interpolate between these two curves. 
\\
\\
Theorem \ref{1.2} shows that taking $k$-chains for $k$ fixed does not achieve this aim of crossing over between the Vershik and LSVK curves. However, we speculate that taking $k=k(n)=cn^{1/2}$ may result in such a crossover. In fact, this problem can be reduced to a rhombus tiling limit shape problem for which there are some methods which may be useful. Another natural question involves increasing the dimension and considering higher dimensional partitions. In three dimensions, these would correspond with plane partitions, which are also nicely interpreted as rhombus tilings.
\\
\\
\textbf{Acknowledgements:} The authors are thankful to Greg Martin and Richard Stanley who initiated a conversation on MathOverflow two years ago on this question, and in particular to Richard Stanley who posed this question to the first author of this work. I. Corwin was partially supported by a Packard Foundation Science and Engineering Fellowship as well as NSF grant DMS:1811143 and DMS:1664650. S. Parekh was partially supported by the Fernholz Foundation's ``Summer Minerva Fellows" program, as well as summer support from I. Corwin's NSF grant DMS:1811143.
\\
\\
\textbf{Outline:} In Section \ref{sec:2} we will derive exponentially sharp upper bounds for the number of nearest-neighbor paths which stay below a given barrier. In Section \ref{sec:3} we introduce a certain functional which will describe the limit shape and the growth rate of the maximizing partitions; this functional appears naturally from the upper bounds of Section \ref{sec:2}. In Section \ref{sec:4} we prove the limit shape theorem abstractly (without identifying the limit shape explicitly), by using nice convexity properties of the functional defined in Section \ref{sec:3}. In Section \ref{sec:5} we use Lagrange multipliers and Hardy-Ramanujan asymptotics to derive the limit shape explicitly (thus completing the proof of Theorem \ref{mr}). In Section \ref{sec:6} we prove Theorem \ref{1.2}. 

\section{Preliminary upper bounds}\label{sec:2}

First we introduce some notation. Always $I$ will denote a subinterval of $\Bbb Z$ or of $\Bbb R$. The specific type of interval will always be made clear from the context. For a (continuous) function $f:I \to \Bbb R$, we define the \textbf{lower convex envelope} of $f$ to be the supremum of all convex functions which are less than or equal to $f$. Note that this is a convex function, which is also the supremum of a countable number of linear functions which are equal (and in fact tangent, if $I=[0,1]$) to $f$ at certain special points. We also define the \textbf{\textit{decreasing} lower convex envelope} to be the sup of all \textit{decreasing} convex functions less than or equal to $ f$, which is a (weakly) decreasing convex function.
\\
\\
Our first lemma is elementary (albeit tedious to state precisely) and says that the lower convex envelope necessarily optimizes a certain type of convex functional over the set of functions less than a given one.

\begin{lem}\label{0}
    Let $\psi: \Bbb R \to \Bbb R\cup\{+\infty\}$ be a convex function. Let $I$ be the discrete interval $\{a,a+1,...,b\} \subset \Bbb Z$. We let $C(I)$ denote the space of all functions from $I \to \Bbb R$. Define a functional $J: C(I) \to \Bbb R$ by the formula
    $$J(f):=
    \sum_{i\in I\backslash\{a\}} \psi\big(f(i)-f(i-1)\big),
    $$
    Fix some $f\in C(I)$, and let $K_{f}:= \{g\in C(I): g\leq f, g(a)=f(a),g(b)=f(b)\}$. Then one has that $\inf_{g \in K_f} J(g) = J(h),$ where $h$ is the lower convex envelope of $f$. Similarly, if $\bar K_f:= \{g \in C(I): g\le f, g(a)=f(a)\}, $ and if we also assume that $\psi$ achieves its minimum at $0$, then $\inf_{g\in \bar K_f} = J(\bar h)$, where $\bar h$ is the decreasing lower convex envelope of $f$.
\end{lem}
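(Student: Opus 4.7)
The plan is to exploit the explicit piecewise-linear structure of the lower convex envelope, together with Jensen's inequality and summation by parts. Write $h$ for the lower convex envelope of $f$ on $I = \{a, \ldots, b\}$; there exist touching points $a = t_0 < t_1 < \cdots < t_N = b$ with $h(t_j) = f(t_j)$, such that $h$ is affine with some slope $s_j$ on each subinterval $\{t_{j-1}, \ldots, t_j\}$, and convexity forces $s_1 \leq s_2 \leq \cdots \leq s_N$. Given any $g \in K_f$, set $\Delta_j := g(t_j) - h(t_j)$; since $g \leq f$ and $h(t_j) = f(t_j)$ we have $\Delta_j \leq 0$, and the pinned endpoints give $\Delta_0 = \Delta_N = 0$.

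On each block $\{t_{j-1}, \ldots, t_j\}$, Jensen's inequality applied to $\psi$ yields
\[
\sum_{i = t_{j-1}+1}^{t_j} \psi\bigl(g(i) - g(i-1)\bigr) \;\geq\; (t_j - t_{j-1})\, \psi(\sigma_j), \qquad \sigma_j := \tfrac{g(t_j) - g(t_{j-1})}{t_j - t_{j-1}},
\]
while the corresponding block sum for $h$ equals $(t_j - t_{j-1})\psi(s_j)$ exactly, since $h$ has constant slope $s_j$ on that block. Selecting any subgradient $p_j \in \partial \psi(s_j)$ (finite under the mild hypothesis that $\psi$ is finite on the slopes of $h$), the subgradient inequality $\psi(\sigma_j) - \psi(s_j) \geq p_j(\sigma_j - s_j)$ together with the telescoping identity $(t_j - t_{j-1})(\sigma_j - s_j) = \Delta_j - \Delta_{j-1}$ combine to give
\[
J(g) - J(h) \;\geq\; \sum_{j=1}^{N} p_j(\Delta_j - \Delta_{j-1}).
\]
A summation by parts rewrites the right side as $p_N \Delta_N - p_1 \Delta_0 + \sum_{j=1}^{N-1} \Delta_j(p_j - p_{j+1})$; the boundary terms vanish, and choosing the $p_j$ monotone in $j$ (possible since $s_1 \leq \cdots \leq s_N$ and $\partial\psi$ is a monotone operator) makes each remaining summand nonnegative, as $\Delta_j \leq 0$ and $p_j - p_{j+1} \leq 0$. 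Hence $J(g) \geq J(h)$, and since $h \in K_f$ itself, this yields the infimum.

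For the decreasing part, I will argue identically, augmented by one additional ingredient. A short structural lemma shows that $\bar h(a) = f(a)$ and that $\bar h$ decomposes as affine pieces on $[a, t_N]$ with nonpositive slopes $s_1 \leq \cdots \leq s_N \leq 0$ followed by a constant tail on $[t_N, b]$ equal to $f(t_N) = \min_{i \geq t_N} f(i)$. I will regard this tail as one more affine block with slope $s_{N+1} := 0$ and ``touching point'' $t_{N+1} := b$, and rerun the Jensen plus summation by parts estimate. The only genuinely new point is that $\Delta_{N+1} = g(b) - \bar h(b)$ need not be nonpositive, since $b$ is not a touching point of $\bar h$ with $f$. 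This is precisely where the hypothesis that $\psi$ attains its minimum at $0$ enters: it forces $0 \in \partial \psi(0)$, so I may choose $p_{N+1} = 0$, which makes the residual boundary term $p_{N+1} \Delta_{N+1}$ vanish. The remaining sum is again nonnegative by the previous sign analysis.

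The main obstacle, in my view, is getting the structural description of $\bar h$ right — specifically verifying that $\bar h$ agrees with $f$ at $a$ and that its constant tail begins exactly at the last touching point, so that the augmented block $[t_N, b]$ with slope $0$ is legitimate. This is a short but slightly finicky discrete-convex-analysis check, distinct from and easier than the main summation-by-parts engine that drives both halves of the proof.
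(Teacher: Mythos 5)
Your argument is essentially correct and takes a genuinely different route from the paper's. The paper proves $J(g)\ge J(h)$ in two passes: first $J(g)\ge J(h_g)$, where $h_g$ is the convex envelope of $g$ itself, by applying Jensen on each maximal interval where $g$ differs from $h_g$; then $J(h_g)\ge J(h)$ for the two nested convex functions $h_g\le h$ with matching endpoints, by an inductive construction that raises $h_g$ up to $h$ one tangent line at a time, invoking Jensen again at each stage. You instead compare $J(g)$ with $J(h)$ in a single pass, using Jensen only on the affine blocks of $h$ and then controlling the block averages $\sigma_j$ via the subgradient inequality and Abel summation, with the signs $\Delta_j\le 0$ and the monotonicity of the slopes $s_j$ doing the work. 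Your route is shorter and makes more transparent where convexity of the envelope enters (through the monotone selection of the $p_j$); the paper's route is entirely Jensen-based and never needs subgradients to exist. Your treatment of the decreasing case --- appending a zero-slope block and using $0\in\partial\psi(0)$ to kill the un-pinned boundary term $p_{N+1}\Delta_{N+1}$ --- isolates cleanly the role of the hypothesis that $\psi$ is minimized at $0$, and the structural facts about $\bar h$ that you defer ($\bar h(a)=f(a)$, constant tail equal to $\min f$ starting at the last touching point) are exactly the ones the paper also verifies; they are routine.

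There is, however, one step that genuinely fails as written. You select $p_j\in\partial\psi(s_j)$ and claim this is possible ``under the mild hypothesis that $\psi$ is finite on the slopes of $h$,'' but finiteness of $\psi$ at a point does not yield a subgradient there: if $s_j$ lies on the boundary of the effective domain of $\psi$ and the one-sided derivative diverges, then $\partial\psi(s_j)=\emptyset$. This is precisely the situation in the paper's application: there $\psi=\Lambda^*$ is the Legendre transform of $\log\cosh$, so $\Lambda^*(\pm 1)=\log 2$ is finite while $(\Lambda^*)'(x)=\tanh^{-1}(x)\to\pm\infty$ as $x\to\pm 1$, hence $\partial\Lambda^*(\pm 1)=\emptyset$; and the envelopes $h_\lambda$ arising in Section \ref{sec:3} have slope exactly $\pm 1$ on their outermost affine pieces. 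The repair is local but must be made. Since the slopes are nondecreasing, a block with extremal slope is the first or the last one, so there is no monotonicity constraint on that side; for fixed $g$, either $\sigma_j$ falls outside the effective domain of $\psi$, in which case Jensen already gives $+\infty$ for that block and there is nothing to prove, or $\sigma_j$ lies in the domain and one may choose $p_j$ finite (large, depending on $\sigma_j$) so that $\psi(\sigma_j)-\psi(s_j)\ge p_j(\sigma_j-s_j)$ holds at that single point. With this caveat addressed, and the deferred check on $\bar h$ carried out, your proof goes through.
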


\begin{proof} We will work with $K_f$ rather than $\bar K_f$, briefly indicating the necessary modifications at the end of the proof. The argument is essentially a geometric one which proceeds in two steps.
\\
\\
\textit{Step 1.} Firstly, we show that $J(f) \geq J(h)$ whenever $f(a) = h(a)$, $f(b)=h(b)$, and $h$ is the lower convex envelope of $f$. Let $C:= \{x \in I: f(x)=h(x)\}$. The complement of $C$ is the union of some finite collection of disjoint intervals $\bigcup_n^N (a_n,b_n) \cap \Bbb Z$. On each interval $(a_i,b_i)\cap \Bbb Z$ it is clear from the definition of the lower convex envelope that $h$ is just a linear function, i.e., $h(x) = \frac{x-a_n}{b_n-a_n}f(b_n) + \frac{b_n-x}{b_n-a_n}f(a_n)$ for $x \in [a_n,b_n]$. By Jensen's inequality, one sees that $$\sum_{a_n+1}^{b_n}\psi(f(i)-f(i-1)) \geq (b_n-a_n) \psi\big(\frac{f(b_n)-f(a_n)}{b_n-a_n}\big) = \sum_{a_n+1}^{b_n} \psi(h(i)-h(i-1)).$$ 
This is already enough to prove Step 1, since $f$ coincides with $h$ outside of the $[a_n,b_n]$.
\\
\\
\textit{Step 2.} Secondly, we show that $J(h) \ge J(k)$ whenever $h,k$ are both convex functions with the property that $h(a) = k(a)$, $h(b)=k(b)$, and $h \le k$. To do this, we inductively define a sequence $\{h_j\}_{j=a}^b$ of functions: $h_a=h$, and $$h_{j+1}(x) = \max\{h_j(x), (x-j+1)k(j)+(j-x)k(j-1)\}.$$ In more geometric terms, we are simply taking $h_{j+1}$ to be the maximum of $h_j$ with the ``tangent line" to $k$ at $\{j-1,j\}.$ In particular each $h_j$ is convex, and it follows from convexity of $k$ that $h_b = k$. Thus the claim will be proved if we can show that $J(h_j) \geq J(h_{j+1})$ for all $j\in\{a,...,b-1\}$. But this is clear, because $h_j(x)$ agrees with $h_{j+1}(x)$ except for $x$ in some interval $[u,v]$ where it equals $\frac{x-u}{v-u}h_j(v) + \frac{v-x}{v-u}h_j(u).$ Hence the same argument from Step 1 (using Jensen's inequalty) applies to show $J(h_j) \ge J(h_{j+1})$. This completes the proof of step 2. 
\\
\\
Step 1 and Step 2 easily imply the claim because if $g \le f$ with $g(a) = f(a)$ and $g(b)=f(b)$, and if $h\le k$ are their respective lower convex envelopes then we have that $J(g) \geq J(h) \geq J(k)$, where the first inequality is from Step 1 and the second is from Step 2. 
\\
\\
Now suppose we replace $K_f$ by $\bar K_f$. Let $c \in \{a,...,b\}$ be the point at which $f$ achieves its minimum value. Let $h$ and $\bar h$ denote the lower convex envelope and decreasing lower convex envelope (respectively) of $f$. Note that $h = \bar h$ on $\{a,...,c\}$, and $h(c) = f(c)$, and therefore if $g \le f$ then the above argument gives $\sum_{a+1}^c \psi(g(i)-g(i-1)) \ge \sum_{a+1}^c \psi(\bar h(i)-\bar h(i-1)).$ On the other hand, note that $\bar h(x) = f(c)$ for $x \in \{c,...,b\}$, and thus by assuming that $\psi$ achieves its minimum at $0$, we get that $\sum_{c+1}^b \psi(\bar h(i)-\bar h(i-1)) = \sum_{c+1}^b \psi(0) \leq \sum_{c+1}^b \psi(g(i)-g(i-1)),$ as desired.
\end{proof}

\begin{lem}\label{1}
    Let $f:\{0,...,n\} \to \Bbb R$ with $f(0)=0$. Let $S$ denote a simple symmetric nearest-neighbor random walk on $\Bbb Z$. Also, let $g$ denote the decreasing lower convex envelope of $f$. We also let $\Lambda^*$ be the large deviation rate function associated with $S$, which means that $\Lambda^*$ is the Legendre transform of $\lambda \mapsto \log \Bbb E[e^{\lambda S_1}]$. Then 
    \begin{align}\label{eq:star}
    \Bbb P\big(S_i \leq f(i), \forall i \leq n\big) \leq e^{-\sum_{i=1}^n \Lambda^*(g(i)-g(i-1))}.\end{align}
\end{lem}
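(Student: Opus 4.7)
The plan is an exponential Chebyshev bound whose multipliers are tuned so that the Legendre--Fenchel inequality is saturated pointwise along the slopes of $g$. For any $\lambda_1, \ldots, \lambda_n \ge 0$, on the event $E := \{S_i \le f(i)\ \forall i\}$ one has $1_E \le \prod_i e^{\lambda_i(f(i) - S_i)}$. Taking expectation, using the independence of the Rademacher increments $X_i = S_i - S_{i-1}$, and substituting $\mu_j := \sum_{i \ge j} \lambda_i$ (which is non-negative and non-increasing), one obtains
$$\mathbb{P}(E) \,\le\, \exp\!\Big(\sum_{j=1}^n \big[\mu_j\, \Delta f(j) + \Lambda(\mu_j)\big]\Big),$$
where $\Delta f(j) := f(j) - f(j-1)$, $f(0) = 0$, and $\Lambda(\mu) := \log\cosh\mu$ is the log-MGF of a single step.

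The key move is to specialize to $\mu_j := -(\Lambda')^{-1}(\Delta g(j))$. Since $g$ is decreasing convex, $\Delta g(j) \le 0$ is non-decreasing in $j$, and since $\Lambda' = \tanh$ is odd, strictly increasing, and vanishes at $0$, this $\mu$ is non-negative and non-increasing, so the corresponding $\lambda_i = \mu_i - \mu_{i+1} \ge 0$ (with $\mu_{n+1} := 0$) are valid. Moreover, this $\mu_j$ is precisely the pointwise minimizer of $\mu \mapsto \mu\,\Delta g(j) + \Lambda(\mu)$, so the Legendre--Fenchel identity together with the evenness of $\Lambda^*$ yields $\Lambda(\mu_j) = -\mu_j\,\Delta g(j) - \Lambda^*(\Delta g(j))$. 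Substituting into the exponent and applying Abel summation (using $f(0) = g(0) = 0$) transforms the bound into
$$\mathbb{P}(E) \,\le\, \exp\!\Big(\sum_{j=1}^n \lambda_j \big(f(j) - g(j)\big) \,-\, \sum_{j=1}^n \Lambda^*(\Delta g(j))\Big).$$

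The crux of the proof is showing that the first sum vanishes. Since $g$ is piecewise affine with only finitely many breakpoints, $\mu_\cdot$ is piecewise constant, so $\lambda_j$ is supported only on the breakpoints of $g$ and possibly at $j = n$. At each internal breakpoint, the standard defining property of the (decreasing) lower convex envelope forces $g(j) = f(j)$. At the endpoint $j = n$: either $\Delta g(n) = 0$ so $\lambda_n = 0$, or $\Delta g(n) < 0$, in which case a short perturbation argument (slightly raising $g(n)$ preserves both monotonicity and convexity) shows that maximality of $g$ forces $g(n) = f(n)$ here as well. Thus every summand $\lambda_j(f(j) - g(j))$ is zero, and the bound reduces to the claimed inequality. (Edge cases where some $\Delta g(j) = -1$, making $\mu_j$ infinite, are handled by approximating $g$ and passing to the limit; if $|\Delta g(j)| > 1$ anywhere then $\mathbb{P}(E) = 0$ trivially.)

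The main obstacle is precisely this last step: verifying that the support of $\lambda_\cdot$ is contained in the contact set of $g$ and $f$. Everything else is routine Chebyshev/Legendre manipulation once the correct multiplier $\mu$ has been identified.
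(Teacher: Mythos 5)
Your proof is correct, and while it starts from the same multi-time exponential Chebyshev bound as the paper, it then takes a genuinely different route. The paper keeps the multipliers $\lambda_1,\dots,\lambda_n$ abstract, invokes Sion's minimax theorem on the compact convex set $C=\{x:-i\le x_i\le f(i)\}$ to exchange $\sup_\lambda$ with $\inf_{x\in C}$, arrives at the bound $\exp\big(-\inf_{x\in C}\sum_i\Lambda^*(x_i-x_{i-1})\big)$, and only then identifies this infimum as $\sum_i\Lambda^*(g(i)-g(i-1))$ via a separate deterministic variational lemma (Lemma \ref{0}) about convex envelopes. You instead exhibit the saddle point explicitly: the tilting $\mu_j=-(\Lambda')^{-1}(\Delta g(j))$ is an admissible dual certificate (nonnegative and non-increasing precisely because $g$ is decreasing and convex), the Legendre identity converts $\Lambda(\mu_j)$ into $-\mu_j\Delta g(j)-\Lambda^*(\Delta g(j))$, and the complementary-slackness observation that $\lambda_j>0$ only at contact points of $g$ with $f$ kills the residual term $\sum_j\lambda_j(f(j)-g(j))$. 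Your route is more elementary in that it needs neither the minimax theorem nor Lemma \ref{0}, at the cost of verifying by hand the structural facts about the decreasing lower convex envelope (every breakpoint, and the right endpoint when $\Delta g(n)<0$, is a contact point --- your perturbation argument for this is valid); the paper's route cleanly separates the probabilistic estimate from the deterministic optimization and handles a general convex $\psi$ in one stroke. Your treatment of the degenerate cases is also sound: if some $|\Delta g(j)|>1$ then convexity forces the first linear piece of $g$, which joins contact points of $f$, to have slope below $-1$, so no nearest-neighbor path can stay under $f$ and $\Bbb P(E)=0$; and capping $\mu_j$ at a large level $M$ when $\Delta g(j)=-1$ preserves monotonicity and passes to the limit since $\log\cosh M - M\to-\log 2=-\Lambda^*(-1)$.
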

\begin{proof}
    The proof uses a standard method for obtaining LDP upper bounds \cite{DZ}. Note that for real numbers $\lambda_1,...,\lambda_n$, and any Borel set $C \subset \Bbb R^n$, \begin{align*}\inf_{ x \in C} e^{\sum_1^n \lambda_i (x_i-x_{i-1})} \Bbb P(S \in C) &\leq \Bbb E[ e^{\sum_1^n \lambda_i(S_i-S_{i-1})} ] = e^{\sum_{i=1}^n \Lambda(\lambda_i)}, \end{align*} where $\Lambda(\lambda) = \log \Bbb E[e^{\lambda S_1}]$ and we impose that $x_0:=0$ in the relevant sum. Rearranging this gives us $$\Bbb P(S \in C) \leq e^{-\inf_{x \in C} \sum_1^n \lambda_i(x_i-x_{i-1}) - \Lambda(\lambda_i)}.$$ Now we optimize over all $\lambda_1,...,\lambda_n$. If we assume that $C$ is compact and convex  we can use the minimax theorem for concave-convex functions \cite{Si58} to interchange the sup over $\lambda$ with the inf over $x$, specifically 
    \begin{align}\label{eq:SC}
    \Bbb P(S \in C) &\leq e^{-\sup_{\lambda \in \Bbb R^n} \inf_{x \in C} \sum_1^n \lambda_i(x_i-x_{i-1}) - \Lambda(\lambda_i)}\\
    \nonumber&\le e^{-\inf_{x \in C} \sup_{\lambda \in \Bbb R^n} \sum_1^n \lambda_i(x_i-x_{i-1}) - \Lambda(\lambda_i)}\\
    \nonumber& \leq e^{-\inf_{x\in C} \sum_1^n \sup_{\lambda \in \Bbb R} \big(\lambda(x_i-x_{i-1}) - \Lambda(\lambda)\big)} \\ 
    \nonumber&= e^{-\inf_{x \in C} \sum_1^n \Lambda^*(x_i-x_{i-1})}.
    \end{align} 
    Now we let $C = \{x \in \Bbb R^n: -i\leq x_i \leq f(i), \forall i\}$, which is clearly compact and convex. Note that $S\in C$ is equivalent to the left-hand side of \eqref{eq:star} (owing to the fact that $S$ only takes $\pm 1$ sized jumps). Applying \eqref{eq:SC} and using Lemma \ref{0} to show that $\inf_{x\in C} \sum_1^n \Lambda^*(x_i-x_{i-1}) = \sum_1^n \Lambda^*(g(i)-g(i-1))$, we arrive at \eqref{eq:star}.
\end{proof}

\begin{cor}\label{2}
    Let $f:\{0,...,n\} \to \Bbb R$ with $f(0)=f(n)=0$, and let $g$ denote the lower convex envelope of $f$ (not the decreasing one). Then the number of nearest neighbor bridges which stay below $f$ (i.e., functions $\gamma: \{0,...,n\} \to \Bbb Z$ such that $\gamma(0)=\gamma(n)=0$, and $|\gamma(i)-\gamma(i-1)|=1$, and $\gamma(i) \leq f(i)$ for all $i$) is bounded above by $ 2^ne^{-\sum_{i=1}^n \Lambda^*(g(i)-g(i-1))}.$
\end{cor}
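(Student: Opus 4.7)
The plan is to express the count of bridges as a random walk probability and then adapt the argument of Lemma \ref{1}, replacing the decreasing lower convex envelope with the (full) lower convex envelope by exploiting the additional terminal constraint at $i = n$.

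First, since each of the $2^n$ nearest-neighbor paths of length $n$ is equally likely under the simple symmetric random walk $S$, the number $N$ of bridges below $f$ satisfies $N = 2^n \, \mathbb{P}(S_n = 0, \, S_i \le f(i) \text{ for all } i \le n)$. Hence it suffices to bound this bridge probability by $e^{-\sum_1^n \Lambda^*(g(i)-g(i-1))}$.

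Next I would repeat the proof of Lemma \ref{1} with the compact convex set $C = \{x \in \mathbb{R}^n : -i \le x_i \le f(i) \text{ for all } i, \, x_n = 0\}$; the lower bound $-i \le x_i$ is automatic for $S$ and is only included to make $C$ compact so that the minimax theorem applies. The chain of inequalities \eqref{eq:SC} then goes through verbatim, giving $\mathbb{P}(S \in C) \le e^{-\inf_{x \in C} \sum_1^n \Lambda^*(x_i - x_{i-1})}$. Now I would invoke Lemma \ref{0} with the set $K_f$ (both endpoints pinned at $0$), rather than $\bar K_f$, to identify the infimum $\inf_{x_0=x_n=0,\, x_i \le f(i)} \sum_1^n \Lambda^*(x_i - x_{i-1})$ as $\sum_1^n \Lambda^*(g(i)-g(i-1))$, where $g$ is the lower convex envelope of $f$.

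The one wrinkle, and the main (minor) obstacle, is that Lemma \ref{0} does not enforce the additional constraint $x_i \ge -i$ that appears in the definition of $C$. I would handle this by showing that the unconstrained minimizer $g$ automatically satisfies $g(i) \ge -i$. Without loss of generality $f(i) \ge -\min(i,n-i)$ for every $i$ (otherwise $N = 0$ and the bound is vacuous), and the function $\phi_-(i) := -\min(i,n-i)$ is convex with $\phi_-(0) = \phi_-(n) = 0$ and $\phi_- \le f$. By the defining property of the lower convex envelope, $g \ge \phi_- \ge -i$, so the constrained and unconstrained infima coincide. Putting everything together yields the claimed bound $N \le 2^n e^{-\sum_1^n \Lambda^*(g(i)-g(i-1))}$.
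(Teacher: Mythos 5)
Your proof is correct, and it takes a somewhat different route from the paper's. The paper derives the corollary as a black-box consequence of Lemma \ref{1}: it splits the bridge at a point $k$ where the envelope $g$ attains its minimum (so that $g(k)=f(k)$), applies the one-sided bound of Lemma \ref{1} separately to $\{0,\dots,k\}$ and to the reversed path on $\{k,\dots,n\}$, and bounds the number of bridges by the number of pairs of such half-paths, multiplying the two estimates. You instead re-run the Chernoff/minimax argument once, building the terminal constraint $x_n=0$ directly into the compact convex set $C$, and then invoke the $K_f$ branch of Lemma \ref{0} (both endpoints pinned) to identify the resulting infimum with $\sum_1^n\Lambda^*(g(i)-g(i-1))$ for the genuine lower convex envelope $g$. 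Your approach is arguably cleaner: it is the only place the $K_f$ branch of Lemma \ref{0} gets used directly, and it sidesteps the small point the paper glosses over, namely that the decreasing lower convex envelope of $f|_{\{0,\dots,k\}}$ agrees with $g$ on $\{0,\dots,k\}$. One remark: your "wrinkle" about the constraint $x_i\ge -i$ is handled correctly but is not actually needed. Since the probability upper bound only requires the one-sided inequality $\inf_{x\in C}\sum\Lambda^*(x_i-x_{i-1})\ge \inf_{x\in K_f}\sum\Lambda^*(x_i-x_{i-1}) = \sum\Lambda^*(g(i)-g(i-1))$, the containment $C\subseteq K_f$ suffices; you do not need the infima to coincide, so verifying $g(i)\ge -i$ via the convex minorant $\max(-i,i-n)$ is careful but superfluous.
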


\begin{proof}
    Let us pick a point $k \in \{0,...,n\}$ at which $g$ attains its minimum value. Note that $g(k)=f(k)$. Note by Lemma \ref{1} the number of nearest neighbor paths of length $k$ starting from 0 and lying below $f|_{\{0,...,k\}}$ is less than or equal to $2^ke^{-\sum_1^k \Lambda^*(g(i)-g(i-1))}$. Similarly the number of nearest neighbor paths of length $n-k$ starting from $0$ and lying below $f|_{\{k+1,...,n\}}$ is less than or equal to $2^{n-k} e^{-\sum_{k+1}^n \Lambda^*(g(i)-g(i-1))}$. Note that the number of bridge paths of length $n$ lying below $f$ is less than the number of pairs of paths $(\gamma,\gamma')$ where $\gamma$ is of the former type and $\gamma'$ is of the latter type. Thus the total number of such bridges is bounded above by product of the two individual upper bounds, which equals $2^n e^{-\sum_1^n \Lambda^*(g(i)-g(i-1))}$.
\end{proof}

An important thing to keep in mind is that the bounds of Propositions \ref{1} and \ref{2} are actually sharp up to some subexponential decay factor (see Section \ref{sec:4}). At an intuitive level, what this says is that if we condition a random walk to stay underneath a fixed barrier, then the path which minimizes the energy of the random walk is none other than the lower convex envelope of that barrier. Another thing to keep in mind is that the bounds of this section hold \textit{uniformly} over all partitions, which makes them a little bit stronger than ordinary LDP upper bounds.

\section{The functional describing the limit shape}\label{sec:3}

For a partition $\lambda$, one recalls the definitions of $f_{\lambda}$ and $g_{\lambda}$ given at the beginning of Section \ref{sec:1}. A $1$-Lipschitz function will always refer to a real-valued function $f$ with the property that $|f(x)-f(y)|\leq |x-y|$, or equivalently $f$ is absolutely continuous and $|f'|\leq 1$.
\\
\\
Let us now estimate (or at least upper bound) the number of subpartitions of a given partition. Each subpartition of a given $\lambda$ can be interpreted as a trajectory of a simple symmetric random walk \textit{bridge} which stays below the graph of $g_{\lambda}$ (or alternatively of $f_{\lambda}$ after rescaling). By Corollary \ref{2}, the number of such bridges can be upper bounded quite easily. Specifically let $h_{\lambda}$ denote the lower convex envelope of $f_{\lambda}$, and let $k$ denote a large enough integer so that $g_{\lambda}(x) = |x|$ whenever $|x| \ge k$. Then by Corollary \ref{2} we know that the number of subpartitions of $\lambda$ (i.e., the number of unit-length random walk bridges which lie in between the graphs of $g_{\lambda}(x)$ and $ |x|$) is upper bounded by \begin{align}
&\;\;\;\;\;2^{2k} e^{-\sum_{i=-k}^{k} \Lambda^*\big(n^{1/2}\big[h_{\lambda}(n^{-1/2}i)-h_{\lambda}(n^{-1/2}(i-1))\big]\big)}\notag \\&= e^{\sum_{-k}^{k} \big[\log 2 - \Lambda^*\big(n^{1/2}\big[h_{\lambda}(n^{-1/2}i)-h_{\lambda}(n^{-1/2}(i-1))\big]\big)\big]} = e^{\sqrt{2n}\int_{\Bbb R} \phi(h_{\lambda}'(x)) dx},\label{U}
\end{align}
where in the final equality we are using the piece-wise linearity of $h_{\lambda}$ and defining $\phi(x) := \log 2 - \Lambda^*(x)$. This function $\phi$ will be very important in the ensuing analysis. In particular, note that $\phi(x)$ is a concave and even function defined on $[-1,1]$ which achieves its maximum value of $\log 2$ at $x=0$, and its minimum of $0$ at $x=\pm 1$.
\\
\\
The functional $f \mapsto \int_{\Bbb R} \phi\circ f'$ appearing in \eqref{U} will describe the optimal rate of growth of the number of subpartitions, as we will show in the following section. Therefore the remainder of this section will be devoted to analyzing this functional. To start, we make the following important definition:

\begin{defn}\label{6}
We define $\mathcal X$ to be the space of all $1$-Lipschitz functions $f: \Bbb R\to \Bbb R$ such that $f(x) \ge |x|$ and furthermore $\int_{\Bbb R} (f(x)-|x|)dx \le 1$. We equip $\mathcal X$ with the topology of uniform convergence on compact sets. Furthermore, we define the functional $F: \mathcal X \to \Bbb R_+$ by $F(h):= \int_{\Bbb R}\phi \circ h'$, where $\phi := \log 2 - \Lambda^*$ and $h'$ is the derivative of $h$.
\end{defn}

A few remarks are in order about this definition.
Firstly, note that $\mathcal X$ is a compact space. Indeed, this is a consequence of Arzela Ascoli: equicontinuity is obvious, and pointwise boundedness follows from the integral condition on elements of $\mathcal X$ combined with the $1$-Lipschitz property (in fact any $f \in \mathcal X$ is bounded above by $x \mapsto \sqrt{x^2+2}$, since this curve is the locus of all points such that the rectangle which has one vertex at that point and another one at the origin, and is also adjacent to the graph of $|x|$, has area exactly $1$).
\\
\\
Secondly, we remark that even though we equipped $\mathcal X$ with the topology of uniform convergence on compact sets, this convergence is actually equivalent to uniform convergence on all of $\Bbb R$. This once again follows from the fact that for all $f\in\mathcal X$ one has that $|x|\le f(x) \leq \sqrt{x^2+2}$, and also because of the fact that $\sqrt{x^2+2}-|x| \to 0$ as $|x|\to \infty$. In particular it is true that $\mathcal X$ is a complete metric space with respect to the uniform metric $$d(f,g) =  \sup_{x\in\Bbb R} |f(x)-g(x)|.$$ The completeness is a consequence of Fatou's Lemma (to ensure that the value of the integral remains $\leq 1$ after taking a limit). This metric will be used very briefly in the proof of a later theorem (\ref{10}).
\\
\\
Thirdly, it is not immediately clear that the integral defining the functional $F(f)$ actually converges for every $f\in \mathcal X$, but this will be taken care of by the following proposition which also highlights the nicest and most important property of $F$, and will crucially be used later.

\begin{prop}\label{3}
The integral defining the functional $F$ converges for every $f \in \mathcal X$. Furthermore, $F$ is upper semicontinuous on $\mathcal X$.
\end{prop}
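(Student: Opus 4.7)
The plan has two parts: first, establish a uniform bound on $F$ over $\mathcal X$ (which in particular gives convergence of the integral); second, use Legendre duality to get upper semicontinuity on bounded intervals, and combine with a uniform tail estimate to upgrade to USC globally. Both parts rest on the Fenchel identity $\phi(y)=\inf_{\lambda\in \Bbb R}[\log 2 + \Lambda(\lambda) - \lambda y]$, which is immediate from $\phi = \log 2 - \Lambda^*$. Applied with $\lambda \ge 0$ and using $\log\cosh \lambda \le \lambda - \log 2 + e^{-2\lambda}$, this yields the pointwise bound $\phi(y) \le \lambda(1-y) + e^{-2\lambda}$ for $y\in[-1,1]$, with a symmetric bound for $\lambda\le 0$ and $(1-y)$ replaced by $(1+y)$.

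For convergence, I work on $[0,\infty)$ by symmetry and set $u(x):=f(x)-x$. Then $u\ge 0$, $u'=f'-1\le 0$ a.e., and $\int_0^\infty u \le 1$ by the area condition defining $\mathcal X$; monotonicity combined with integrability forces $u(x) \le 1/x$. Choosing $\lambda(x):=\log(1+x)$ in the pointwise bound yields
$$\int_0^\infty \phi(f'(x))\,dx \le \int_0^\infty \log(1+x)(1-f'(x))\,dx + \int_0^\infty \frac{dx}{(1+x)^2}.$$
The second integral equals $1$, and integration by parts on the first (with $1-f'=-u'$, with vanishing boundary terms because $\log(1+x)u(x)\le\log(1+x)/x\to 0$) gives
$$\int_0^\infty \log(1+x)(1-f'(x))\,dx = \int_0^\infty \frac{u(x)}{1+x}\,dx \le \int_0^\infty u(x)\,dx \le 1.$$
So $\int_0^\infty \phi(f'(x))\,dx \le 2$, and by symmetry $F(f) \le 4$ uniformly on $\mathcal X$. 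Running the same argument on $[N,\infty)$ shows $\sup_{f\in\mathcal X}\int_{|x|>N}\phi(f'(x))\,dx \to 0$ as $N\to\infty$, giving a uniform tail estimate.

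For upper semicontinuity, the uniform tail estimate reduces matters to showing that $f\mapsto \int_{-N}^N \phi(f'(x))\,dx$ is upper semicontinuous under uniform convergence, for each fixed $N$. For any piecewise constant $\lambda$ on $[-N,N]$ adapted to a partition $\{I_j=[a_j,a_{j+1})\}$, the Fenchel bound gives
$$\int_{-N}^N \phi(f_n'(x))\,dx \le \sum_j |I_j|[\log 2 + \Lambda(\lambda_j)] - \sum_j \lambda_j (f_n(a_{j+1}) - f_n(a_j)),$$
and the second term is linear in the finitely many endpoint values of $f_n$, hence continuous under uniform convergence $f_n\to f$. Passing to $\limsup_n$ and then optimizing the right-hand side over $\{\lambda_j\}$ (using the Fenchel identity coordinatewise in $j$) leaves $\sum_j |I_j|\phi(\bar y_j)$ with $\bar y_j := (f(a_{j+1})-f(a_j))/|I_j|$. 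Refining the partition so that $\bar y_j \to f'(x)$ a.e.\ (Lebesgue differentiation) and using bounded convergence (since $|\phi|\le \log 2$), the right-hand side passes to $\int_{-N}^N \phi(f'(x))\,dx$. Combined with the uniform tail estimate this gives $\limsup_n F(f_n) \le F(f)$. The main obstacle is the noncompact domain: upper semicontinuity of concave integral functionals is classical over a bounded interval, so the bulk of the real work lies in the somewhat ad hoc Chernoff-type argument with $\lambda(x)=\log(1+x)$ needed to control tails uniformly over $\mathcal X$.
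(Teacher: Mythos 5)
Your proof is correct, and it follows the same overall decomposition as the paper's: upper semicontinuity of $f\mapsto\int_{-N}^{N}\phi\circ f'$ for each fixed $N$, plus a tail estimate $\sup_{f\in\mathcal X}\int_{|x|>N}\phi\circ f'\to 0$ uniform over $\mathcal X$ (which also yields convergence of the integral, since $\phi\ge 0$). On the compact interval your argument is essentially the paper's: both express the functional as an infimum, over partitions, of functionals depending continuously on finitely many point values of $f$ (your Fenchel bound with piecewise constant $\lambda$ is just the dual form of the paper's Jensen inequality for concave $\phi$), and both pass to the limit along refining partitions via a.e.\ differentiation and bounded convergence. The tail estimate is where you genuinely diverge. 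The paper bounds $\phi(y)\le C(1-|y|)^{2/3}$ using the logarithmic singularity of $\phi'$ at $\pm1$, proves $\sum_n n(1-f(n)+f(n-1))\le 3$ by summation by parts from the area constraint, and closes with H\"older's inequality, obtaining a tail of order $A^{-1/3}$. You instead use the Chernoff-type bound $\phi(y)\le\lambda(1-y)+e^{-2\lambda}$ with a position-dependent tilt $\lambda(x)=\log(1+x)$ and integrate by parts against $u(x)=f(x)-x$, using only that $u\ge 0$ is decreasing with $\int_0^\infty u\le 1$ (hence $u(x)\le 1/x$). This is shorter, and it yields a tail of order $(\log N)/N$ --- exactly the optimal rate that the paper remarks on but does not prove. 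The one step you leave implicit, that running the computation on $[N,\infty)$ produces a uniformly vanishing bound, does check out: the boundary term is $\log(1+N)u(N)\le (\log(1+N))/N$, and $\int_N^\infty u(x)(1+x)^{-1}dx\le\int_N^\infty x^{-2}dx=1/N$, both uniform over $\mathcal X$.
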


\begin{proof}
We will prove that if $f_n$ is a family of $1$-Lipschitz functions such that $f_n \to f$ uniformly, then $\limsup_{n \to \infty} F(f_n) \leq F(f)<\infty$. The key difficulty here is that $F$ is defined on functions on the whole real line, which is not compact. The proof will therefore proceed in two steps: first we replace $\Bbb R$ with a large compact interval and prove the upper semicontinuity in this simpler case; second we prove a certain ``tightness" property \eqref{b} for functions in $\mathcal X$ which will simultaneously also show that the integral defining $F(f)$ necessarily converges for all $f\in\mathcal X$.
\\
\\
The first step is to show that for each fixed (large) $A>0$ one has that \begin{equation}\label{a}\limsup_{n \to \infty} \int_{[-A,A]} \phi \circ f_n' \leq \int_{[-A,A]} \phi \circ f'. \end{equation}The proof of this is quite standard, and purely topological (e.g., does not rely on properties of the space $\mathcal X$). Nevertheless we include a proof of \eqref{a} for completeness.
\\
\\
For simplicity, let us replace the interval $[-A,A]$ by $[0,1]$ (the same argument works in the former case with some extra scaling factors).
Let $\mathcal X[0,1]$ denote the space of $1$-Lipschitz functions on $[0,1]$ equipped with the uniform topology. We will show that 
 the functional $G(f):= \int_0^1 \phi\circ f'$ is upper semicontinuous from $\mathcal X[0,1]\to \Bbb R$. To prove this it suffices to write $G$ as the infimum of some collection of continuous functionals. To do this, we consider partitions $\mathcal P = (0\leq t_1 \leq ... \leq t_n \leq 1)$ of $[0,1]$, and we define $G_{\mathcal P}(f):= \sum_1^n (t_i-t_{i-1}) \phi \big( \frac{f(t_i)-f(t_{i-1})}{t_i-t_{i-1}} \big)$. It is then clear that each $G_{\mathcal P}$ is continuous from $\mathcal X[0,1] \to \Bbb R$. We then claim that $G = \inf_{\mathcal P} G_{\mathcal P}$ (where the infimum is taken over all partitions of $[0,1]$) which would prove upper semicontinuity. To prove this equality, first note by Jensen's inequality and concavity of $\phi$ that for all $a<b$ and all $f$ one has $\int_a^b \phi \circ f' \leq (b-a) \phi\big( \frac{f(b)-f(a)}{b-a}\big)$, which proves that $G \leq \inf_{\mathcal P} G_{\mathcal P}$. To prove the other direction, we define the partition $\mathcal P_n$ to be the one consisting of dyadic intervals $[k2^{-n},(k+1)2^{-n})$ with $0\le k \le 2^n-1$. For a $1$-Lipschitz function $f$ let $f_n$ denote the continuous function with $f_n(0)=0$ and whose derivative $f_n'(x)$ takes the constant value $2^n\big(f((k+1)2^{-n})-f(k2^{-n})\big)$ for $x \in [k2^{-n},(k+1)2^{-n})$. Note that $f_n'$ forms a bounded martingale with respect to the dyadic filtration on the probability space $[0,1]$ (i.e., the filtration associated with the nested family of partitions $\{\mathcal P_n\}_n)$. Consequently $f_n'$ converges to $f'$ a.e, and thus $\phi\circ f_n' \to \phi\circ f'$ a.e. Hence by the bounded convergence theorem we conclude that $G_{\mathcal P_n}(f) = \int_0^1 \phi \circ f_n' \to \int_0^1 \phi \circ f' = G(f)$. This shows that $G \ge \inf_{\mathcal P} G_{\mathcal P}$. This proves upper semicontinuity of $G$ and in turn also proves \eqref{a}. 
\\
\\
Now given that \eqref{a} holds, we want to take $A \to \infty$, but this involves a nontrivial interchange of limits and this is where noncompactness of the real line gets in the way. So now we actually need to use special properties of the space $\mathcal X$.
\\
\\
We will show that for every $\epsilon>0$, there exists some $A=A(\epsilon)>0$ (large) so that for all $f \in \mathcal X$ one has that \begin{equation}\label{b}
\int_{\Bbb R\backslash [-A,A]} \phi \circ f' < \epsilon.
\end{equation} Note that together with \eqref{a}, this is enough to complete the proof that $\limsup_n F(f_n) \leq F(f)$. The key here is, of course, that the bound of \eqref{b} is uniform over \textit{all} functions $f \in \mathcal X$. Note that \eqref{b} also shows that $F(f)<\infty$ for all $f\in\mathcal X$.
\\
\\
To prove \eqref{b}, we first note that if $f$ is $1$-Lipschitz, then $f(x)-x$ is necessarily (weakly) decreasing for $x \ge 0$, thus \begin{align}\label{eq:fndiff}1-f(n)+f(n-1) = \big(f(n-1)-(n-1)\big)-\big(f(n)-n\big) \ge 0 \quad\textrm{for } n\geq 1.\end{align} 
The condition that $\int_{\Bbb R} (f(x)-|x|)dx \leq 1$ shows that $\sum_{n\ge 0} f(n)-n \leq 3$ (e.g., via an integral comparison test, since we know $f(n)-n$ is decreasing and $f(0)\le \sqrt{2}<2$). Then for all $N \ge 1$ we find that 
\begin{align*}
&\;\;\;\;\;\sum_{n=1}^N n(1-f(n)+f(n-1))= \sum_{n=1}^N \sum_{k=1}^n (1-f(n)+f(n-1)) \\&= \sum_{k=1}^N \sum_{n=k}^N (1-f(n)+f(n-1)) = \bigg[\sum_{k=1}^N f(k-1)-(k-1)\bigg] - N(f(N)-N),
\end{align*}
where in the last line we used \eqref{eq:fndiff} so that the inner sum telescopes. Since $N(f(N)-N)\ge 0$ we can upper bound the last expression by $\sum_{k \ge 0} (f(k)-k).$ Hence we can let $N \to \infty$ in the preceding expression and we see that 
\begin{align}\label{eq:fk3}
\sum_{n \ge 1} n (1-f(n)+f(n-1)) \leq \sum_{k\ge 0} f(k)-k \leq 3.\end{align}
\\
Appealing to the definition of $\phi(x)$ we see that in $(-1,1)$, $\phi^\prime(x)=-\tanh^{-1}x$ which has logarithmic singluarities at $\pm 1$. Thus, it follows that $\phi$ asymptotically looks like $x|\log x|$ near $x=\pm 1$, i.e., $\lim_{x \to \pm 1} \frac{\phi(x)}{|x\mp 1|\log|x\mp 1|}$ will be a finite nonzero value. Since $|\log x| \leq Cx^{-1/3}$ near $x=0$, this implies that there exists some $C>0$ such that $\phi(x) \leq C(1-|x|)^{2/3}$ for all $x\in [-1,1]$. In particular, for all $A \ge 0$ one has
\begin{align*}
    \sum_{n \ge A} \phi\big( f(n)-f(n-1)\big) &\leq C\sum_{n\ge A} \big(1-f(n)+f(n-1)\big)^{2/3} \\ &\leq C\bigg(\sum_{n \geq A} n^{-2}\bigg)^{1/3} \bigg( \sum_n n(1-f(n)+f(n-1)) \bigg)^{2/3} \\ &\leq C\cdot A^{-1/3}\cdot 3^{2/3}.
\end{align*}
For the second inequality we use the fact that if $a_n$ are nonnegative real numbers such that $\sum_n na_n<\infty$, then by Holder's inequality $\sum_{n\ge A} a_n^{2/3} \leq \big( \sum_{n\ge A} na_n\big)^{2/3} \big(\sum_{n\ge A} n^{-2}\big)^{1/3}$. The final inequality uses the bound derived in \eqref{eq:fk3}, as well as $\sum_{n \ge A} n^{-2} \leq A^{-1}$.
\\
\\
To close out our proof, observe that Jensen's inequality and the concavity of $\phi$ show that $\int_{[n-1,n]} \phi \circ f' \leq \phi(f(n)-f(n-1))$. This, together with the preceding arguments, then shows that $$\int_A^{\infty} \phi\circ f' \leq \sum_{n=A}^{\infty} \phi\big( f(n)-f(n-1)\big) \lesssim A^{-1/3},$$ independently of $f$, which finally proves \eqref{b}.
\end{proof}

At this point it is important to remark that Proposition \ref{3} is \textbf{not} just some technical and otherwise unimportant intermediate step. Really it is where the ``meat" of the proof of the limit shape (Theorem \ref{mr}) really lies. Specifically, the important thing here is the second half of the proof where we prove a type of ``tightness" estimate \eqref{b} for functions in $\mathcal X$. In terms of partitions, what it really shows (in an equivalent formulation) is that the sequence of partitions maximizing the number of subpartitions, stays bounded on the $n^{1/2}$ scale, i.e., that the sequence $f_{\lambda_n}$ from Theorem \ref{mr} does not lose any mass in the limit (meaning that any subsequential limit $f$ of $f_{\lambda_n}$ satisfies $\int(f(x)-|x|)dx=1$). We remark that the bound $A^{-1/3}$ appearing at the end of the proof may actually be improved optimally to $\frac{\log A}{A}$, but this is slightly more difficult.
\\
\\
As a corollary of Proposition \ref{3}, we can combine it with compactness of the space $\mathcal X$ in order to obtain the following key result.

\begin{cor}\label{4}
The functional $F$ from Definition \ref{6} admits a maximum $M(F)$ on the space $\mathcal X$. There is a unique function $f$ at which the maximum is attained and this maximizer $f$ is a convex and symmetric function (i.e. $f(x) = f(-x)$) and moreover $\int_{\Bbb R} (f(x)-|x|)dx = 1$.
\end{cor}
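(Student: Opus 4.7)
The strategy proceeds in three stages: existence from compactness plus upper semicontinuity, uniqueness from strict concavity of $\phi$, and then symmetry, convexity, and the integral condition as corollaries of uniqueness by exhibiting competing elements of $\mathcal X$ (a reflection, a lower convex envelope, a rescaling) which must themselves be maximizers.

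Existence is immediate: $\mathcal X$ is compact (Arzela-Ascoli, as noted after Definition \ref{6}) and $F$ is upper semicontinuous on $\mathcal X$ (Proposition \ref{3}), so $F$ attains a maximum $M(F)$. For uniqueness, the key input is strict concavity of $\phi$ on $[-1,1]$: since $\Lambda(\lambda) = \log\cosh\lambda$, a direct computation gives $\phi''(x) = -1/(1-x^2) < 0$ on $(-1,1)$, and $\phi$ is continuous up to the boundary, hence strictly concave on $[-1,1]$. Given two maximizers $f_1, f_2 \in \mathcal X$, the midpoint $g := \tfrac12(f_1 + f_2)$ lies in $\mathcal X$, and pointwise concavity applied to $g' = \tfrac12(f_1' + f_2')$ yields $F(g) \geq \tfrac12 F(f_1) + \tfrac12 F(f_2) = M(F)$. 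Combined with $F(g) \leq M(F)$, this forces equality; strict concavity then gives $f_1' = f_2'$ almost everywhere, hence $f_1 - f_2$ is a constant. Since both $f_1, f_2$ are asymptotic to $|x|$ at $\pm\infty$ (as $f(x) - x$ is nonincreasing on $\{x \geq 0\}$ with finite integral, a fact already used in the proof of Proposition \ref{3}), the constant vanishes and $f_1 = f_2$.

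For the remaining properties: (i) $\tilde f(x) := f(-x)$ lies in $\mathcal X$ with $F(\tilde f) = F(f)$ since $\phi$ is even, so by uniqueness $f(-x) = f(x)$. (ii) Let $h$ be the lower convex envelope of $f$; then $h \in \mathcal X$ (convex with $|x| \leq h \leq f$, hence asymptotic to $|x|$, hence $|h'| \leq 1$). The complement of the contact set $\{h = f\}$ is a union of open intervals on which $h$ is the chord of $f$; none can be unbounded, for a tail $(b, \infty)$ would force $h(x) = x$ on $(b, \infty)$ and $f(b) = b$, whereupon nonincreasingness of $f(x) - x$ would give $f \equiv x$ on $[b, \infty)$, contradicting $h < f$ there. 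On each (bounded) interval, Jensen's inequality for the concave $\phi$ gives $\int \phi(h') \geq \int \phi(f')$, so $F(h) \geq F(f)$ globally, and uniqueness forces $h = f$, i.e., $f$ is convex. (iii) If $\int(f - |x|)\,dx = c < 1$, the rescaling $f_t(x) := t f(x/t)$ with $t = 1/\sqrt{c} > 1$ lies in $\mathcal X$ with $F(f_t) = tF(f) > F(f)$ (using $M(F) > 0$, as witnessed by $x \mapsto \max(|x|, 1) \in \mathcal X$ which has $F$-value $2 \log 2$), contradicting maximality; hence $\int(f - |x|)\,dx = 1$.

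The main obstacle is the uniqueness step, which hinges on strict concavity of $\phi$ together with careful use of the asymptotic condition on $\mathcal X$ to rule out a nonzero constant difference. A secondary delicate point lies in the convexity argument, where one must exclude unbounded intervals in the complement of $\{h = f\}$; this is precisely where the monotonicity of $f(x) - |x|$—itself a consequence of the $1$-Lipschitz property and the integrability condition defining $\mathcal X$—plays an essential role, mirroring its use in the tightness estimate of Proposition \ref{3}.
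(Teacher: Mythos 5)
Your proof is correct and follows essentially the same route as the paper: existence from compactness of $\mathcal X$ plus upper semicontinuity of $F$, uniqueness from strict concavity of $\phi$, and then symmetry, convexity, and the integral condition via competing elements (reflection, convex envelope/chord, rescaling). The only differences are cosmetic — you invoke uniqueness directly where the paper re-runs the strict-concavity inequality, and your uniqueness step is in fact slightly more careful than the paper's, since you explicitly rule out two maximizers differing by a nonzero constant (a case the paper's ``easily implies'' glosses over, though it cannot occur within $\mathcal X$).
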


\begin{proof}
Any upper semicontinuous function on a compact space achieves its maximum.
\\
\\
The uniqueness of the maximizer is a concavity property. Specifically we note that $\phi$ is a strictly concave function, meaning $\phi((1-t)a+tb) > (1-t)\phi(a) + t\phi(b)$ whenever $t\in(0,1)$ and $a\ne b$. This then easily implies that $F((1-t)f+tg) > (1-t)F(f)+tF(g) $ for $t\in (0,1)$ and $f \ne g$. Clearly this rules out the existence of two distinct maxima.
\\
\\
Symmetry is another consequence of concavity. Specifically, if the maximizer $f$ was not symmetric, then we can define its reflection $f_s(x):=f(-x)$. Clearly $F(f_s) = F(f)$ and thus if $f \neq f_s$ then as above we have that $F(\frac12 f+\frac12 f_s) > \frac12 F(f) + \frac12 F(f_s) = F(f),$ which is a contradiction.
\\
\\
Let $f$ be the maximizer. To prove that $\int(f(x)-|x|)dx=1$, suppose that this integral took some value $\alpha<1$. Then we let $h(x) = \alpha^{-1/2}h(\alpha^{1/2}x)$. Clearly $\int (h(x)-|x|)dx = 1$, and $h$ is $1$-Lipschitz. Moreover a simple substitution reveals that $F(h) = \alpha^{-1/2}F(f) >F(f)$ which is a contradiction.
\\
\\
To prove convexity of $f$, suppose (for contradiction) that $a,b$ are two points of $\Bbb R$ such that there is a linear function $\ell$ equal to $f$ at both $a$ and $b$, and such that $\ell<f$ on $(a,b)$. We define $h$ to be equal to $f$ on $\Bbb R\backslash [a,b]$, and equal to $\ell$ on $[a,b]$. Then by Jensen's inequality one has that $\int_a^b \phi \circ f' < (b-a) \phi\big( \frac{f(b)-f(a)}{b-a}\big) = \int_a^b \phi \circ h'$, which means that $F(f)<F(h)$; a contradiction. This completes the proof.
\end{proof}

\section{The limit shape Theorem}\label{sec:4}

Note that in \eqref{U} we already proved that for any sequence $\lambda_n$ of partitions of $n$, the number of subpartitions is bounded above by $e^{\sqrt{2n}M(F)}$ where $M(F)$ is the maximum value of the functional $F$ from above. A natural question is whether there exists a sequence of partitions for which the number of subpartitions actually grows at this optimal rate. It turns out that the answer is yes (up to some subexponential factor which is irrelevant), which retrospectively justifies why we performed such an in-depth analysis of the functional $F$ in the first place.

\begin{prop}\label{5}
There exists a sequence of partitions $\mu_n$ of $n$ such that the number of subpartitions of $\mu_n$ actually grows as $e^{\sqrt{2n}M(F) - o(\sqrt{n})}$ as $n \to \infty$. 
\end{prop}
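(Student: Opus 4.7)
The plan is to produce an explicit sequence of partitions $\mu_n$ whose rescaled profiles $f_{\mu_n}$ approximate the unique maximizer $f^\star$ of $F$ from Corollary \ref{4}, and to bound the subpartition count from below by applying Mogulskii's sample-path large deviations principle \cite{Mog92} to ``target'' profiles that sit strictly inside $f^\star$.

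\textbf{Interior approximation.} Given $\epsilon>0$, I first build a symmetric, convex profile $\tilde f\in\mathcal X$ which is piecewise smooth, equals $|x|$ outside a compact interval $[-T,T]$, satisfies $|\tilde f'|\le 1-\eta$ on the interior of its support, sits in the strict corridor $|x|+\delta'\le\tilde f(x)\le f^\star(x)-\delta$ on $[-T,T]$, and has $F(\tilde f)\ge F(f^\star)-\epsilon$. Such a $\tilde f$ is obtained by shrinking $f^\star$ via the affine combination $(1-\alpha)f^\star+\alpha|x|$, truncating to a large compact set, and mollifying; the bound $F(\tilde f)\ge F(f^\star)-\epsilon$ then follows from continuity of $\phi$ on $[-1,1]$.

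\textbf{Construction of $\mu_n$.} Since $f^\star$ is symmetric, convex, $1$-Lipschitz and $\int(f^\star-|x|)\,dx=1$, I discretize the region $\{|x|\le y\le f^\star(x)\}$ on the Russian lattice with spacing $\sim n^{-1/2}$; this yields a Young diagram with $n+O(\sqrt n)$ cells, and after adjusting $O(\sqrt n)$ cells one has a partition $\mu_n$ of exactly $n$ with $d(f_{\mu_n},f^\star)=o(1)$. In particular, for $n$ large, $f_{\mu_n}\ge\tilde f+\delta/2$ on $[-T,T]$.

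\textbf{Lower bound via Mogulskii.} Subpartitions of $\mu_n$ whose support sits inside $[-T\sqrt{2n},T\sqrt{2n}]$ are in bijection (after shifting) with nearest-neighbor bridges $\beta:\{0,\dots,L_n\}\to\Bbb Z$ of length $L_n=2T\sqrt{2n}$ with $\beta(0)=\beta(L_n)=0$ lying in the corridor determined by $|\cdot|$ and $g_{\mu_n}$. The total number of unconstrained bridges is $\sim 2^{L_n}/\sqrt{L_n}$ by the local CLT. Applying the Mogulskii LDP lower bound to the SSRW bridge for a uniform $\nu$-tube around the rescaled profile $\sqrt{2n}\cdot\tilde f(\cdot/\sqrt{2n})$ gives that the probability a bridge falls in this tube is at least $\exp\bigl(-\sqrt{2n}\int\Lambda^*(\tilde f')\,dx-o(\sqrt n)\bigr)$. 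Taking $\nu<\min(\delta/2,\delta')$, any such bridge automatically lies in the corridor. Combining, and using $L_n\log 2=\sqrt{2n}\int_{-T}^{T}\log 2\,dx$ together with $\phi=\log 2-\Lambda^*$ (noting that $\phi(\pm 1)=0$),
\begin{equation*}
\#\{\text{subpartitions of }\mu_n\}\ \ge\ \tfrac{2^{L_n}}{\sqrt{L_n}}\cdot e^{-\sqrt{2n}\int\Lambda^*(\tilde f')\,dx-o(\sqrt n)}\ =\ e^{\sqrt{2n}F(\tilde f)-o(\sqrt n)}\ \ge\ e^{\sqrt{2n}(M(F)-\epsilon)-o(\sqrt n)}.
\end{equation*}
A diagonal argument in $\epsilon\to 0$ delivers a single sequence $\mu_n$ satisfying the claim.

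\textbf{Main obstacle.} The crux is deploying the Mogulskii LDP cleanly in the bridge setting, which is typically phrased for a free walk; one can either cite an existing bridge version or split $\beta$ at its midpoint into two independent walks conditioned on endpoints and absorb the local CLT correction into the $o(\sqrt n)$ error. The other delicate ingredient is the interior approximation $\tilde f$: it must simultaneously lie strictly below $f^\star$, strictly above $|x|$, and have slopes strictly inside $(-1,1)$, so that the open-set LDP lower bound genuinely applies and the resulting bridges automatically respect the corridor.
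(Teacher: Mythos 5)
Your proposal is correct and follows essentially the same route as the paper: build $\mu_n$ by discretizing $f_{\max}$, pass to a compactly supported interior approximation (your $\tilde f$ plays the role of the paper's $f_\delta=\max\{|x|,f_{\max}-\delta\}$), apply Mogulskii's lower bound in a tube strictly inside the corridor, and absorb polynomial factors into the $o(\sqrt n)$. The one point you flag as an obstacle — the bridge versus free-walk issue — is resolved in the paper exactly by your second suggested remedy: split at the vertical axis by symmetry and note, via Cauchy–Schwarz on the terminal distribution, that two independent constrained half-walks meet with probability at least $1/(\delta\sqrt n)$, which is subexponential.
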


The key behind proving this proposition is Mogulskii's theorem \cite{Mog92}, which is really the primary underlying idea behind this entire work. This result essentially says that the bound in \eqref{U} (and also in Propositions \ref{1} and \ref{2}) is actually \textit{sharp} (again, up to some subexponential factor which is not relevant to us). But before getting to the proof, let us first prove the following important corollary.

\begin{thm}[Limit shape theorem]\label{10}
Let $\lambda_n$ and $f_{\lambda_n}$ be as in Theorem \ref{mr}. As $n \to \infty$, the sequence $f_{\lambda_n}$ converges uniformly to the unique maximizer $f_{\max}$ of the functional $F$ from Definition \ref{6}. 
\end{thm}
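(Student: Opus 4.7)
The plan is to combine the sharp upper bound from \eqref{U} with the sharp lower bound from Proposition \ref{5}, and then exploit compactness and upper semicontinuity of $F$ together with the uniqueness of the maximizer established in Corollary \ref{4}.

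First, I would observe that since $\lambda_n$ is a maximizer, the number of subpartitions of $\lambda_n$ is at least that of the sequence $\mu_n$ from Proposition \ref{5}, which gives the lower bound of $e^{\sqrt{2n}M(F)-o(\sqrt n)}$. On the other hand, the upper bound \eqref{U} tells us that the number of subpartitions of $\lambda_n$ is at most $e^{\sqrt{2n}F(h_{\lambda_n})}$, where $h_{\lambda_n}$ denotes the lower convex envelope of $f_{\lambda_n}$. Taking logarithms and dividing by $\sqrt{2n}$, these two estimates combine to give
$$F(h_{\lambda_n}) \geq M(F) - o(1),$$
and since $M(F)$ is the maximum, in fact $F(h_{\lambda_n}) \to M(F)$.

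Next, I would check that both $f_{\lambda_n}$ and $h_{\lambda_n}$ lie in $\mathcal X$. For $h_{\lambda_n}$ this uses that $|x|$ is convex and bounded above by $f_{\lambda_n}$ (so $|x|\leq h_{\lambda_n}\leq f_{\lambda_n}$), that $f_{\lambda_n}$ is $1$-Lipschitz with slopes in $\{-1,+1\}$ outside a compact set (so its lower convex envelope is also $1$-Lipschitz), and that $\int(h_{\lambda_n}-|x|)dx\leq \int(f_{\lambda_n}-|x|)dx=1$. Then by compactness of $\mathcal X$, any subsequence of $(f_{\lambda_n},h_{\lambda_n})$ admits a further subsequence along which $f_{\lambda_{n_k}}\to f^*$ and $h_{\lambda_{n_k}}\to h^*$ uniformly on compacta. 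Upper semicontinuity of $F$ (Proposition \ref{3}) then forces $F(h^*)\geq M(F)$, so $F(h^*)=M(F)$; by the uniqueness clause of Corollary \ref{4}, we conclude $h^* = f_{\max}$.

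The last and main step is to upgrade the convergence of the lower convex envelopes to convergence of $f_{\lambda_n}$ itself, and this is the place I expect the argument to be most delicate. Here I would use the fact that $h_{\lambda_{n_k}}\leq f_{\lambda_{n_k}}$ passes to the limit to give $f_{\max} = h^*\leq f^*$, while on the other hand $f^*\in\mathcal X$ gives $\int(f^*-|x|)dx\leq 1$. Combined with the full-mass statement from Corollary \ref{4}, namely $\int(f_{\max}-|x|)dx=1$, this sandwich forces $f^*=f_{\max}$ almost everywhere, hence everywhere by continuity. Since every subsequence has a sub-subsequence converging to $f_{\max}$, and $\mathcal X$ is a compact metrizable space, the whole sequence $f_{\lambda_n}$ converges to $f_{\max}$; the remark following Definition \ref{6} then upgrades this to uniform convergence on all of $\mathbb R$. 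The key subtle point, which is really where the tightness estimate \eqref{b} from Proposition \ref{3} is doing its work, is that no mass escapes to infinity in the limit, so that $f^*$ and $f_{\max}$ can be compared via their integrals.
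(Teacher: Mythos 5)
Your proposal is correct and follows essentially the same route as the paper: sandwich the subpartition count between Proposition \ref{5} and \eqref{U} to get $F(h_{\lambda_n})\to M(F)$, use compactness of $\mathcal X$, upper semicontinuity of $F$, and uniqueness of the maximizer to conclude $h_{\lambda_n}\to f_{\max}$, and then use the full-mass identity $\int(f_{\max}(x)-|x|)dx=1$ together with $h_{\lambda_n}\le f_{\lambda_n}$ and $\int(f_{\lambda_n}(x)-|x|)dx=1$ to force $f_{\lambda_n}$ and $h_{\lambda_n}$ to coincide in the limit. The only cosmetic differences are that you phrase the second step via subsequential limits rather than the paper's quantitative ``$F(f)>M(F)-\delta$ implies $d(f,f_{\max})<\epsilon$'' argument, and the third step via an integral comparison of subsequential limits rather than the paper's direct $L^1$ estimate $\|f_{\lambda_n}-h_{\lambda_n}\|_{L^1}\to 0$; both variants rest on exactly the same ingredients.
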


\begin{proof}
Let $s(\lambda_n)$ denote the number of subpartitions of $\lambda_n$, and let $h_{\lambda_n}$ denote the lower convex envelopes of $f_{\lambda_n}$. By Proposition \ref{5} and equation \eqref{U} we have that $$e^{\sqrt{2n}M(F) - o(\sqrt{n})} \leq s(\lambda_n) \leq e^{\sqrt{2n} F(h_{\lambda_n})} \leq  e^{\sqrt{2n}M(F)},\;\;\;\;\;\;\;\;\;\text{as}\;\; n\to \infty,$$ which means that $M(F)-o(1) \leq F(h_{\lambda_n}) \leq M(F)$ as $n \to \infty$.
\\
\\
Thus we see that $F(h_{\lambda_n}) \to M(F)$ as $n \to \infty$. This is already enough to imply that $h_{\lambda_n} \to f_{\max}$ uniformly on compact sets as $n \to \infty.$ Indeed it is true that for every $\epsilon>0$ there exists $\delta>0$ such that (for all $f \in \mathcal X$) $F(f)>M(F)-\delta$ implies that $d(f,g)<\epsilon$ (here $d$ denotes the metric on $\mathcal X$ which was specified following Definition \ref{6}). If this was not the case then we can choose an $\epsilon>0$ such that $\sup_{d(f_{\max},g)\ge \epsilon} F(g) = M(F)$. But the space $\mathcal A$ of $1$-Lipschitz functions $g$ such that $d(f_{\max},g)\ge \epsilon$ is again a compact subset of $\mathcal X$ (being a closed subset of $\mathcal X$). Furthermore $F$ is still an upper semicontinuous function on $\mathcal A$, hence it achieves its maximum value which we already know is $M(F)$. Then there exists some $g_{\max} \in \mathcal A$ such that $F(g_{\max}) = M(F)$, which clearly contradicts uniqueness of the maximizer since $d(f_{\max},g_{\max})\ge \epsilon$ by construction.
\\
\\
So we have proved that the convex envelopes $h_{\lambda_n}$ (though not necessarily the functions $f_{\lambda_n}$ themselves) converge uniformly to $f_{\max}$. Note that since $f_{\lambda} \geq h_{\lambda}$ (by definition of the convex envelope) we have \begin{align*} \int_{\Bbb R} |f_{\lambda_n} - h_{\lambda_n}| &=\int_{\Bbb R} (f_{\lambda_n}(x)-h_{\lambda_n}(x)) dx \\ &= \int_{\Bbb R} \big((f_{\lambda_n}(x)-|x|) - (h_{\lambda_n}(x) - |x|)\big)dx \\ &= 1- \int_{\Bbb R} (h_{\lambda_n}(x)-|x|)dx.
\end{align*}
Now $h_{\lambda_n}$ converges to $f_{\max}$ and by Corollary \ref{4} we know that $\int(f_{\max}(x)-|x|)dx = 1$, therefore by applying the preceding calculation and then Fatou's Lemma, we see that $$\limsup_n \int_{\Bbb R} |f_{\lambda_n}-h_{\lambda_n}| = 1-\liminf_n \int_{\Bbb R} (h_{\lambda_n}(x)-|x|)dx \leq 1-\int (f_{\max}(x)-|x|)dx = 0.$$ Therefore $\|f_{\lambda_n}-h_{\lambda_n}\|_{L^1(\Bbb R)} \to 0$, and since all functions are $1$-Lipschitz, this $L^1$ convergence also implies uniform convergence.
\end{proof}

Although this abstractly proves convergence to some limit shape, we still do not know anything about what the limit shape looks like geometrically. For instance is it bounded, and if so, is it a triangular shape or something more complicated? This question will be addressed in the following section.
\\
\\
Let us now start to get to the proof of Proposition \ref{5}. As mentioned before, the key is the following result, which essentially gives matching lower bounds to the upper bounds which we gave in Section \ref{sec:2}. A proof may be found in Theorem 5.2.1 of \cite{DZ} or in the original paper \cite{Mog92}.

\begin{thm}[Mogulskii 1992] Let $\mu_n$ denote the law on $C[0,1]$ of $(\frac1n S_{nt})_{t \in [0,1]}$ where $S$ is any i.i.d. random walk (whose increment distribution has exponential moments), and the values of $S$ at non-integer points are understood to be linearly interpolated from the two nearest integer points. Then $\mu_n$ satisfies an LDP with rate $n$ and good rate function $$I(f) = \int_0^1 \Lambda^* \circ f',$$ where $\Lambda^*$ denotes the Legendre transform of $\lambda \mapsto \log \Bbb E[e^{\lambda S_1}]$, and the integral is meant to be understood as $+\infty$ if $f$ is not absolutely continuous.
\end{thm}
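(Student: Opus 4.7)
The plan is to follow the standard proof of sample-path large deviations for random walks (as in Dembo--Zeitouni \cite{DZ}, Chapter 5): combine Cram\'er's theorem for single-time marginals with independence of increments to obtain a finite-dimensional LDP, and then upgrade to the path-space LDP via exponential tightness.

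First I would establish the finite-dimensional LDP. For a fixed partition $0 = t_0 < t_1 < \ldots < t_k = 1$, the increments $\tilde S^N_{t_i} - \tilde S^N_{t_{i-1}}$ are mutually independent, and each is the rescaled sum of $\lfloor N(t_i - t_{i-1})\rfloor$ iid copies of $S_1$. Cram\'er's theorem applied to each increment, combined with independence, shows that the joint vector $(\tilde S^N_{t_1}, \ldots, \tilde S^N_{t_k})$ satisfies an LDP at rate $N$ with rate function $I_{\mathcal P}(x_1,\ldots,x_k) = \sum_{i=1}^{k} (t_i-t_{i-1})\Lambda^*\bigl(\tfrac{x_i-x_{i-1}}{t_i-t_{i-1}}\bigr)$ (where $x_0 = 0$). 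As the mesh of $\mathcal P$ tends to zero, $I_{\mathcal P}$ evaluated along an absolutely continuous $f$ converges to $I(f) = \int_0^1 \Lambda^*\circ f'$, by convergence of a Riemann sum together with convexity and lower semicontinuity of $\Lambda^*$.

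Next I would derive the path-space bounds. For the lower bound at an open set $O$ containing a piecewise-linear $f_0$, independence factorizes $\Bbb P(\tilde S^N\in O)$ across disjoint intervals, and Cram\'er's lower bound on each factor yields $\liminf N^{-1}\log\Bbb P(\tilde S^N\in O) \ge -I(f_0)$; general absolutely continuous $f_0$ are then handled by piecewise-linear approximation (using that $I(f^{(n)}) \to I(f)$ along step-function approximations of $f'$). For the upper bound on closed cylinder sets, I would reuse the exponential-Chebyshev-plus-minimax argument from the proof of Lemma \ref{1}, giving $\limsup N^{-1}\log\Bbb P(F) \le -\inf_F I_{\mathcal P}$ for closed cylinder $F$.

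The main obstacle is exponential tightness: exhibiting, for each $\alpha > 0$, a compact $K_\alpha \subset C[0,1]$ with $\Bbb P(\tilde S^N \notin K_\alpha) \le e^{-\alpha N}$ for all large $N$. This is where the exponential-moment hypothesis on $S_1$ enters essentially: combined with Doob's maximal inequality applied to exponential martingales, it produces quantitative control on the modulus of continuity of $\tilde S^N$ on small subintervals, and hence (via Arzel\`a--Ascoli) exponential concentration on a compact subset of $C[0,1]$. Once exponential tightness is in hand, the cylinder-set LDP upgrades to the full LDP on $C[0,1]$ via Dawson--G\"artner (or by directly restricting to cylinder-generated closed sets), and the goodness of $I$ follows from the coercivity implicit in the same moment bounds.
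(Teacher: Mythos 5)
The paper does not actually prove this theorem: it is quoted as a known result, with the proof deferred to Theorem 5.2.1 of \cite{DZ} and to the original paper \cite{Mog92}. Your outline reproduces the standard architecture of that proof (finite-dimensional LDP from Cram\'er's theorem plus independence of increments, Dawson--G\"artner to pass to the projective limit, exponential tightness and the inverse contraction principle to reach the sup-norm topology on $C[0,1]$), so there is no genuinely different method to compare against; the question is only whether your sketch closes.

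Two steps need more care. First, the identification of the rate function: what Dawson--G\"artner hands you is $\sup_{\mathcal P} I_{\mathcal P}(f)$, and showing this equals $\int_0^1 \Lambda^*\circ f'$ is not a Riemann-sum limit, because $f'$ is merely an $L^1$ function and $\Lambda^*$ may take the value $+\infty$. One direction is Jensen's inequality ($I_{\mathcal P}(f)\le I(f)$ for every partition, by convexity of $\Lambda^*$); the other requires the dyadic-martingale / Lebesgue differentiation argument together with Fatou --- this is precisely the computation the paper carries out for the concave function $\phi$ in the proof of Proposition \ref{3}, so you could cite that argument with signs reversed. Second, your lower bound as written does not close: Cram\'er's lower bound for each increment controls only the \emph{endpoint} of each subinterval, not the event that the entire sub-path stays inside a sup-norm neighborhood $O$ of $f_0$. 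To control the full path you must either run the exponential tilting on each linear piece and apply a law of large numbers under the tilted measure (showing the tilted walk stays uniformly close to the line segment with probability bounded below), or --- more cleanly, and consistent with your final paragraph --- settle for the lower bound in the pointwise-convergence topology and upgrade to the uniform topology via exponential tightness and the inverse contraction principle. Finally, note that for the application in this paper $S$ is the simple symmetric walk, whose rescaled paths are $1$-Lipschitz and start at $0$; the exponential tightness you single out as the main obstacle is then immediate from Arzel\`a--Ascoli, and the Doob-martingale modulus-of-continuity estimate is needed only for unbounded increments.
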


It should be noted that Mogulskii's result is a vast strengthening of Cramer's theorem (from just the endpoint of an iid sample path to its entire history), in the same way that Donsker's invariance principle for iid random walks is a strengthening of the classical central limit theorem. Finally we are ready to prove Proposition \ref{5}.

\begin{proof}[Proof of Proposition \ref{5}]Let $f_{\max}$ be the maximizer from Corollary \ref{4}. We choose a sequence $\mu_n$ of partitions of $n$ such that $f_{\mu_n}$ converges uniformly to $f_{\max}$. This can be done as follows. First we construct an intermediate partition $\tilde \mu_n$ by putting boxes of side length $n^{-1/2}$ beneath the graph of of $f_{\max}$ until no more boxes can be put in such a way that the graph of $f_{\tilde \mu_n}$ remains below that of $f$. Since $f_{\tilde \mu_n} \leq f$, one notices that $\tilde \mu_n$ will not actually be a partition of $n$ but rather of some number $k(n) \leq n$. However, it is true that $|f_{\tilde \mu_n} - f_{\max}| \leq Cn^{-1/2}$ for some constant independent of $n$ (otherwise more boxes could be added to $\tilde \mu_n$ without eclipsing the graph of $f_{\max}$). Now we can define $\mu_n$ to be equal to $\tilde \mu_n$ but with the remaining $n-k(n)$ boxes added to the first column of $\tilde \mu_n$. This will not change the limiting function $f_{\max}$.
\\
\\
We define $f_{\delta}(x):=\max\{|x|, f_{\max}(x)-\delta\}$, and we define the \textit{support} of $f_{\delta}$ to be the set of $x$ where $f_{\delta}(x)>|x|$ (this is an interval centered at $0$, by convexity and symmetry of $f_{\max}$). Note that for large enough values of $n$, the $\delta/2$ neighborhood of $f_{\delta}$ lies strictly below $f_{\mu_n}$ on the support of $f_{\delta}$ (this is because $f_{\mu_n}\to f_{\max}$ uniformly). We are now going to consider nearest-neighbor (random walk) paths of grid-size $n^{-1/2}$ which lie in between the graphs of $f_{\delta/2}$ and $f_{3\delta/2}$. Such a path will be called $(\delta,n)$-admissible. Let $k=k(n,\delta)$ denote the positive integer such that $n^{-1/2}k = \text{argmin}_{y \in \frac1{\sqrt{n}}\Bbb Z} |y-a|$ where $a=a(\delta):=\inf\{x>0: f_{\delta}(x)=x\}$.
\\
\\
Note by Mogulskii's Theorem that the number of $(\delta,n)$-admissible paths terminating on the vertical axis (i.e., nearest-neighbor functions $\gamma:n^{-1/2}\Bbb Z_{\le 0} \to n^{-1/2}\Bbb Z$ where $\Bbb Z_{\le 0}$ denotes non-positive integers) is greater or equal to $2^k e^{-\sqrt{2n}\int_{-n^{-1/2}k}^0 \Lambda^* \circ f_{\delta}' - o(\sqrt{n})} = e^{\sqrt{2n} \int_{-\infty}^0 \phi\circ f_{\delta}' - o(\sqrt{n})} $, as $n \to \infty$ (with $\delta$ fixed).
\\
\\
Now we notice that two independent such random walks which are started from $(-n^{-1/2}k,n^{-1/2}k)$ and conditioned to stay between $f_{\delta/2}$ and $f_{3\delta/2}$ have probability at least $\frac{1}{\delta \sqrt n}$ of terminating at the same point. Indeed, this is because there are at most $\delta \sqrt n$ possible points $\{x_i\}_{i=1}^{\delta\sqrt n}$ at which such a walk can terminate (because the grid size is $n^{-1/2})$, and if $p_i$ is the probability of terminating at point $x_i$, then by Cauchy-Schwarz one finds that $1 =\sum_1^{\delta \sqrt n} p_i \leq (\sum_i p_i^2)^{1/2} (\delta \sqrt n )^{1/2}$, and because the probability of two independent such walks terminating at the same point equals $\sum_i p_i^2$.
\\
\\
Now, a random walk \textit{bridge} of grid size $n^{-1/2}$ which lies between $f_{\delta/2}$ and $f_{3\delta/2}$ (which defines a subpartition of $\mu_n$ for large enough $n$) can be viewed as the concatenation of a pair of these random walk paths started from $(-n^{-1/2}k,n^{-1/2}k)$ terminating at the same point on the vertical axis (note here that we are using the property that $f_{\delta}(x) = f_{\delta}(-x))$. By the observations of the preceding two paragraphs, the number of such pairs is bounded below by $\frac{2}{\delta \sqrt{n}} \big(e^{\sqrt{2n} \int_{-\infty}^0 \phi\circ f_{\delta}' - o(\sqrt{n})}\big)^2.$ The prefactor $\frac2{\delta\sqrt n}$ may be absorbed into the $o(\sqrt n)$ term in the exponent, giving a lower bound of $e^{\sqrt{2n} F(f_{\delta})-o(\sqrt{n})}.$ The $o(\sqrt n)$ term may depend on $\delta$ but this is not a problem.
\\
\\
Since this lower bound holds true for arbitrary $\delta>0$, the claim now follows if we can show that $F(f_{\delta}) \to F(f_{\max})$ as $\delta \to 0$. To do this, note that $f_{\delta}' \to f_{\max}'$ pointwise (trivially by the definition of $f_{\delta}$). Thus by Fatou's Lemma and maximality of $F(f_{\max})$ it is true that $F(f_{\max}) \leq \liminf_{\delta \to 0} F(f_{\delta}) \le \limsup_{\delta} F(f_n) \leq \max_g F(g)= F(f_{\max})$, which completes the argument.
\end{proof}

\section{Characterizing the limit shape}\label{sec:5}

So far, many of our methods could have been used for more general types of models than the simple symmetric random walk (replacing $\phi$ with a more general concave function). We now move onto trying to find the limit shape $f_{\max}$ exactly, which will involve working with specific details of the function $\phi$, and thus most of the subsequent arguments and analysis will be specialized just to the case of the simple random walk. In particular we will show that $f_{\max}$ has, up to scaling and centering, the shape of the curve $x \mapsto \log \cosh x$. In particular it is not just the triangular function $x\mapsto \max\{1,|x|\}$, nor is it the Vershik-Kerov curve. It is, in fact, the Vershik curve which is the limit shape of \textit{uniformly} sampled partitions of $n$ \cite{Ver96}.
\\
\\
Since $f_{\max}$ is an even convex function there exists a \textit{maximal} interval $(-a_{max},a_{max})$ (which we will henceforth refer to as the \textbf{support} of $f_{\max}$) on which $f(x)>|x|$. This interval is the interior of the largest closed interval containing the support (in  the usual sense) of the second distributional derivative $f_{\max}''$ (which is a nonnegative Borel measure since $f_{\max}$ is convex). Note that it is possible that $a_{max}=+\infty$, and in a moment we will show that this is indeed the case.
\\
\\
Let $\psi$ be a smooth function with support contained in $(-a_{max},a_{max})$, such that $\int_{\Bbb R} \psi = 0$. Then we claim \begin{equation}\label{7}
    \int_{\Bbb R} (\phi' \circ f_{\max}') \cdot \psi' = 0.
\end{equation}
Indeed, one easily checks that $\lim_{\epsilon \to 0} \epsilon^{-1} \big(F(f_{\max}+\epsilon\psi)-F(f_{\max})\big) = \int_{\Bbb R} (\phi'\circ f_{\max}')\cdot \psi'$. However, since $\int \psi = 0$ and since the support of $\psi$ is contained in the support of $f_{\max}$, it follows that for $\epsilon$ in a small enough neighborhood of zero, the function $f_{\max}+\epsilon \psi$ is an element of $\mathcal X$, and thus $F(f_{\max}+\epsilon \psi)\leq F(f_{\max})$. Hence if $\lim_{\epsilon \to 0} \epsilon^{-1} \big(F(f_{\max}+\epsilon\psi)-F(f_{\max})\big)$ exists then it must equal zero, proving \eqref{7}.
\\
\\
Now if $h: [-a,a] \to \Bbb R$ is any measurable function such that $\int h\cdot \psi' = 0$ for every function $\psi \in C_c^{\infty}$ with $\int \psi = 0$, then this precisely means that the distributional derivative of $h$ is orthogonal (with respect to the $L^2$ pairing) to all except the constant functions. In particular it means that $h'$ is itself a constant function. Applying this principle to $h:= \phi' \circ f_{\max}'$, we see that $\phi'(f_{\max}'(x)) = \beta x + C$ for some $\beta, C \in \Bbb R.$ But $\phi$ and $f_{\max}$ are even functions, so $\phi'\circ f_{\max}'$ is and odd function, and thus $C=0$. Now recall that $\phi = \log 2 -\Lambda^*$ where $\Lambda^*$ is the Legenrde transform of $\Lambda(x) = \log \cosh x$. This implies that $\Lambda' \circ \phi'$ is the negative of the identity function on $[-1,1]$. In particular $\phi'(f_{\max}'(x)) = \beta x$ implies that $-f_{\max}'(x) = \Lambda'(\beta x)$, which in turn implies that $f_{\max}(x) = - \frac1{\beta} \log \cosh(\beta x)+D$ for all $x$ in the support of $f_{\max}$. Here $D$ is some constant of integration. Of course, we know that $f_{\max}$ is convex, which implies $\beta \le 0$. Thus, by renaming $\beta$ to be $-\beta$ we have proved the following.

\begin{prop}\label{8}
There exists some $\beta_{max}\ge 0$ and some $D_{max} > 0$ such that for every $x \in (-a_{max},a_{max})$ one has that $f_{\max}(x) = \frac1{\beta_{max}}\log \cosh (\beta_{max} x) +D_{max}$. 
\end{prop}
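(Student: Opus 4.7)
The plan is to derive an Euler--Lagrange equation for $f_{\max}$ via a first-variation argument, and then invert the resulting identity pointwise using the Legendre duality between $\phi$ and $\log\cosh$. The maximizer $f_{\max}$ is characterized by Corollary \ref{4}, so on the support $(-a_{max},a_{max})$ both the constraint $f_{\max}(x) > |x|$ and the normalization $\int(f_{\max}-|x|)dx = 1$ are active in a useful way.

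First I would fix a smooth test function $\psi \in C_c^{\infty}(\Bbb R)$ with $\mathrm{supp}(\psi) \subset (-a_{max},a_{max})$ and $\int_{\Bbb R} \psi = 0$, and verify that $f_{\max} + \epsilon\psi \in \mathcal X$ for all sufficiently small $\epsilon$. The integral condition is preserved exactly because $\int\psi = 0$; the constraint $f \ge |x|$ holds because on the compact set $\mathrm{supp}(\psi)$ the function $f_{\max}$ lies above $|x|$ by a strictly positive margin; and the $1$-Lipschitz condition holds for small $|\epsilon|$ provided $|f_{\max}'| < 1$ on $\mathrm{supp}(\psi)$. Given admissibility, maximality gives $F(f_{\max}+\epsilon\psi) \leq F(f_{\max})$, and differentiating at $\epsilon = 0$ (justified by dominated convergence with $|\phi'|$ bounded on compact subsets of $(-1,1)$) produces the identity
\begin{equation*}
\int_{\Bbb R} (\phi' \circ f_{\max}')\,\psi' \,dx = 0,
\end{equation*}
which is \eqref{7}.

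Next I would convert \eqref{7} into a pointwise ODE. Since the identity holds for every mean-zero $\psi$ supported in $(-a_{max},a_{max})$, the distributional derivative of $\phi'\circ f_{\max}'$ equals a constant $\beta$, so $\phi'(f_{\max}'(x)) = \beta x + C$ almost everywhere on the support. The symmetry $f_{\max}(-x)=f_{\max}(x)$ from Corollary \ref{4} makes $f_{\max}'$ odd, hence $\phi'\circ f_{\max}'$ odd, forcing $C=0$. I then invert using $\Lambda(y) = \log\cosh y$ and $\phi = \log 2 - \Lambda^*$: standard Legendre duality gives $\phi' = -(\Lambda^*)' = -(\Lambda')^{-1} = -\tanh^{-1}$, so $-\tanh^{-1}(f_{\max}'(x)) = \beta x$, i.e.\ $f_{\max}'(x) = -\tanh(\beta x)$. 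Integrating yields $f_{\max}(x) = -\frac{1}{\beta}\log\cosh(\beta x) + D$ on $(-a_{max},a_{max})$. Convexity of $f_{\max}$ forces $\beta \le 0$, and renaming $\beta_{max} := -\beta \ge 0$ and $D_{max} := D$ gives the claimed formula; positivity $D_{max} > 0$ follows from $f_{\max}(0) > 0 = |0|$, since $0 \in (-a_{max},a_{max})$.

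The main technical obstacle is the admissibility check in the first step: the $1$-Lipschitz constraint could in principle be saturated at interior points of the support, which would prevent the one-sided perturbation $f_{\max} + \epsilon\psi$ from being $1$-Lipschitz. I would handle this by restricting attention to test functions supported on compact subintervals $[-b,b] \subset (-a_{max},a_{max})$ on which $|f_{\max}'| \leq 1-\delta$ holds for some $\delta > 0$ (these form a dense enough class to conclude the distributional identity on any such subinterval), and then exhausting the support by such subintervals. The conclusion closes up a posteriori since the ODE we ultimately derive yields $|f_{\max}'(x)| = |\tanh(\beta_{max} x)| < 1$ on every bounded portion of the support, so saturation cannot occur.
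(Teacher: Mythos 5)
Your proposal is correct and follows essentially the same route as the paper: a first-variation argument with mean-zero test functions supported in $(-a_{max},a_{max})$ yielding \eqref{7}, the distributional conclusion $\phi'(f_{\max}')=\beta x + C$, oddness to kill $C$, and Legendre inversion of $\phi' = -\tanh^{-1}$ to integrate up to the $\log\cosh$ profile. Your extra care about possible saturation of the $1$-Lipschitz constraint (handled by exhausting the support with compact subintervals where $|f_{\max}'|\le 1-\delta$, which holds since $f_{\max}'$ is monotone and $|f_{\max}'(b)|=1$ would force $f_{\max}(x)=|x|$ beyond $b$) is a point the paper glosses over, but it does not change the argument.
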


In the possibility where $\beta_{max} = 0$, the statement of the above Proposition is of course nonsensical, but (because the condition $\int (f_{\max}(x)-|x|)dx=1$ determines $D_{max}$ uniquely from $\beta_{max}$) it is meant to be interpreted in the sense that $f_{\max}(x) = D_{max}$ on its support, meaning that the limit shape would be the triangular function $x\mapsto \max\{1,|x|\}$. We will rule out this possibility shortly.
\\
\\
Our next goal is to find out whether or not $a_{max}<+\infty$, i.e., whether the limit shape is something compact or not. The next theorem tells us that the answer is no.

\begin{thm}\label{9}In the notations of Proposition \ref{8}, $a_{max} = +\infty$, $\beta_{max} = \frac{\pi}{2\sqrt{3}},$ $D_{max} = \frac1{\beta_{max}}\log 2$, and $F(f_{\max}) = \pi/\sqrt{3}$. In particular, $f_{\max}(x) = \frac{2\sqrt{3}}{\pi}\log\big(2\cosh(\frac{\pi}{2\sqrt{3}}x)\big).$
\end{thm}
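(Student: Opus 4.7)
The plan is to identify the explicit candidate $f^*(x) := \tfrac{2\sqrt{3}}{\pi}\log\bigl(2\cosh\bigl(\tfrac{\pi}{2\sqrt{3}}x\bigr)\bigr)$ with the maximizer $f_{\max}$ from Corollary \ref{4} by computing $F(f^*) = \pi/\sqrt{3}$ directly and invoking Hardy-Ramanujan to show $M(F) \le \pi/\sqrt{3}$. All four claims in the theorem will then follow from the uniqueness clause of Corollary \ref{4} combined with the formula of Proposition \ref{8}.

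Setting $\beta^* := \pi/(2\sqrt{3})$, the first step is to check that $f^* \in \mathcal{X}$. Convexity and the $1$-Lipschitz property are immediate from $(f^*)'(x) = \tanh(\beta^*x) \in (-1,1)$. The identity $f^*(x) - |x| = (\beta^*)^{-1}\log(1 + e^{-2\beta^*|x|})$ gives $f^* \ge |x|$ at once, and after the substitution $u = 2\beta^*|x|$ also yields $\int_{\mathbb{R}}(f^*(x) - |x|)\,dx = (\beta^*)^{-2}\int_0^\infty \log(1 + e^{-u})\,du = (\beta^*)^{-2}\cdot\pi^2/12 = 1$, using the standard evaluation $\int_0^\infty \log(1+e^{-u})\,du = \eta(2) = \pi^2/12$.

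Next, I would compute $F(f^*)$. From $\Lambda(y) = \log\cosh y$ and $\cosh(\tanh^{-1}u) = (1-u^2)^{-1/2}$, a short Legendre-transform calculation gives $\Lambda^*(\tanh u) = u\tanh u - \log\cosh u$, so $\phi(\tanh u) = \log(2\cosh u) - u\tanh u$, which for $u \ge 0$ rewrites as $\log(1 + e^{-2u}) + 2u/(e^{2u}+1)$. Substituting $u = \beta^*x$ in $F(f^*) = \int_{\mathbb{R}} \phi(\tanh(\beta^*x))\,dx$, exploiting evenness, and applying the pair of evaluations $\int_0^\infty \log(1+e^{-2u})\,du = \int_0^\infty 2u/(e^{2u}+1)\,du = \pi^2/24$ (both reducing to $\eta(2)/2$) gives $F(f^*) = (2/\beta^*)\cdot \pi^2/12 = \pi^2/(6\beta^*) = \pi/\sqrt{3}$.

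To close, I would invoke Hardy-Ramanujan: since $p(n) = O(e^{\pi\sqrt{2n/3}})$, the number of subpartitions of any partition of $n$ is bounded by $\sum_{k=0}^n p(k) = O(n\,e^{\pi\sqrt{2n/3}})$, so Proposition \ref{5} forces $\sqrt{2n}\,M(F) - o(\sqrt{n}) \le \pi\sqrt{2n/3} + O(\log n)$, giving $M(F) \le \pi/\sqrt{3}$. Combined with $\pi/\sqrt{3} = F(f^*) \le M(F)$, this gives $F(f^*) = M(F)$, and the uniqueness clause of Corollary \ref{4} then forces $f^* = f_{\max}$. Reading off the parameters from $f^*(x) = (\beta^*)^{-1}\log\cosh(\beta^*x) + (\log 2)/\beta^*$, which is strictly greater than $|x|$ everywhere, yields all four assertions: $a_{max} = +\infty$, $\beta_{max} = \pi/(2\sqrt{3})$, $D_{max} = (\log 2)/\beta_{max}$, and $F(f_{\max}) = \pi/\sqrt{3}$. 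The main obstacle is the explicit evaluation of $F(f^*)$; the substantive step is recognizing both constituent integrals as $\eta(2)/2$, after which everything else is arithmetic.
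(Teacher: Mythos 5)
Your proposal is correct and follows essentially the same route as the paper: verify the explicit candidate lies in $\mathcal X$, compute $F$ of it to be $\pi/\sqrt{3}$, use Hardy--Ramanujan together with Proposition \ref{5} to get $M(F)\le \pi/\sqrt{3}$, and conclude via the uniqueness clause of Corollary \ref{4}. The only difference is cosmetic: you evaluate $\int_0^\infty \phi(\tanh u)\,du$ by splitting it into two integrals each equal to $\eta(2)/2$ after rescaling, whereas the paper exhibits the explicit antiderivative $u\log(e^u+e^{-u})-u^2$; both computations are valid.
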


\begin{proof}The key will be to use the Hardy-Ramanujan asymptotics together with the identity \begin{align}\label{f}\int_0^{\infty} \log(1+e^{-2x})dx = \int_0^{\infty} \sum_{n \ge 1} (-1)^{n+1}\frac{e^{-2nx}}{n}dx = \sum_{n \ge 1} \frac{(-1)^{n+1}}{2n^2}=\frac{\pi^2}{24} .\end{align}
Here we Taylor expanded the logarithm and then used the identity $\sum_{n\ge 1} n^{-2} = \frac{\pi^2}{6}$ and its corollaries: $\sum_{n\;even} n^{-2} = \frac{\pi^2}{24}$ and $\sum_{n\;odd} n^{-2} = \frac{3\pi^2}{24}$.
\\
\\
We now recall the Hardy-Ramanujan asymptotics \cite{HR} for the partition numbers. Specifically, if $p(n)$ denotes the number of partitions of $n$, then $p(n) = e^{\pi\sqrt{2n/3}-o(\sqrt{n})}$ as $n \to \infty$. Notice that $p$ is an increasing function of $n$, and every subpartition of a partition of $n$ is a partition of some integer $i\le n$. Thus the number of subpartitions of any given partition $\lambda$ of $n$ is upper bounded by $\sum_{i=0}^n p(i) \leq (n+1)p(n) = (n+1) e^{\pi\sqrt{2n/3}-o(\sqrt{n})}$. The prefactor $(n+1)$ may be absorbed into the $o(\sqrt n)$ term in the exponent, and thus by Proposition \ref{5} we conclude that $F(f_{\max}) \leq \pi/\sqrt{3}$.
\\
\\
Now, let $f(x):= \alpha^{-1/2} \log(2\cosh (\alpha^{1/2} x))$, where $\alpha := \int_{-\infty}^{\infty} (\log(2\cosh x)-|x|)dx = 2\int_0^{\infty} \log(1+e^{-2x})dx = \frac{\pi^2}{12}$ by \eqref{f}. Note that $f$ is $1$-Lipschitz (because it has derivative given by $\tanh(\alpha^{1/2}x)$ which is bounded in absolute value by $1$), and also note (by substituting $u=\alpha^{1/2}x$) that $\int_{\Bbb R} (f(u)-|u|)dx = 1$ so that $f \in \mathcal X$. Now we claim that $F(f) = \pi/\sqrt{3} = 2\alpha^{1/2}$, which would indeed prove that $f=f_{\max}$. To prove this, note that $F(f)= \alpha^{-1/2} \int_{\Bbb R} \phi(\tanh u)du,$ so that proving that $F(f) = 2\alpha^{1/2}$ now amounts to showing that $\int_{\Bbb R} \phi(\tanh u)du = 2\alpha$. In other words, we want to show 
\begin{equation}\label{g}
    \int_0^{\infty} \phi(\tanh u)du = 2\int_0^{\infty} \log(1+e^{-2u})du.
\end{equation}
One readily checks that $\phi(\tanh u) = \log(e^u+e^{-u}) -u\tanh u,$ from which proving \eqref{g} amounts to checking that $\int_0^{\infty} \big( \log(e^u+e^{-u}) -2u+u\tanh u\big)du = 0$. But the integrand here has an explicit antiderivative given by $u \log(e^u+e^{-u}) -u^2$, which is readily seen to evaluate to zero at both $u=0$ and as $u \to \infty$. This proves \eqref{g}, which finally shows that $f=f_{\max}$.
\end{proof}

A further direction of study is to try to gain more precise asymptotics on the exact number of subpartitions of the maximizing sequence. Specifically we would like to find precise asymptotics on the $o(\sqrt n)$ term in the optimal growth rate $e^{\pi\sqrt{2n/3}-o(\sqrt n)},$ and we believe this can be done using more precise large deviations estimates. A similarly difficult ``local" asymptotic problem would be to find the rate at which the side lengths go to $\infty$ (note that Theorem \ref{9} merely proves that they grow faster than $\sqrt{n}$).

\section{Extension to $k$-chains of subpartitions}\label{sec:6}

We now extend the limit shape theorem to the case of partitions which maximize the number of $k$-chains of subpartitions, which will prove theorem \ref{1.2}. Since the proof is not significantly more complicated, we briefly indicate the changes which need to be made at each stage of the argument.
\\
\\
First we address the necessary modifications in Section \ref{sec:2}. In the notation of Corollary \ref{2}, consider $k$-chains $\gamma_k\leq... \leq \gamma_2 \leq \gamma_1 \leq f$ of nearest-neighbor bridges which stay below $f$. Then (by viewing the chain as just a $k$-tuple of paths and disregarding the ordering) the same corollary says that the number of these $k$-chains is bounded above by 
$$\bigg( 2^n e^{-\sum_1^n \Lambda^*(g(i)-g(i-1))} \bigg)^k.$$
Then, in equation \eqref{U} at the beginning of Section \ref{sec:3}, this bound will tell us that for a given partition $\lambda$ of $n$, the number of $k$-chains of subpartitions of $\lambda$ (i.e., $k$-chains of random walk bridges of grid size $n^{-1/2}$ which are nestled in between the graphs of $f_{\lambda}(x)$ and $|x|$) is upper-bounded by 
\begin{equation}\label{13}e^{k\sqrt{2n} F(h_{\lambda}')},
\end{equation}
where as usual $h_{\lambda}$ is the lower convex envelope of $f_{\lambda}$, and $F$ is the functional of Definition \ref{6}.
\\
\\
Hence, all that is left to do is to show that the upper bound \eqref{13} is actually sharp up to the exponential scale (after replacing $F(h_{\lambda}')$ with $M(F) = \pi/\sqrt{3}$ there). The way to do this is by modifying the proof of Proposition \ref{5} to lower bound the number of ensembles of $k$ distinct paths staying below the graph of $f_{\max}$. In the notation of that proof, we consider ensembles (implicitly depending on $n$) of nearest neighbor bridges $(\gamma^i)_{i=1}^k$ from $n^{-1/2}\Bbb Z \to n^{-1/2}\Bbb Z,$ with the property that $f_{i\delta}\le \gamma_i \le f_{(i+1)\delta}$ for each $1\le i \le k$. Clearly each such ensemble defines a $k$-chain of subpartitions of $\mu_n$. Moreover the number of such $k$-chains is merely the product over $i\in\{1,...,k\}$, of the individual number of paths lying between the graphs of $f_{i\delta}$ and $f_{(i+1)\delta},$ and we already know a good individual lower bound from the proof of Proposition \ref{5}. Specifically, we can lower bound this number of $k$-chains by 
$$\prod_{i=1}^k \big( e^{\sqrt{2n} F(f_{(i+\frac12)\delta})-o(\sqrt n)}\big) = e^{\sqrt{2n}\sum_{i=1}^k F(f_{(i+\frac12)\delta}) - o(\sqrt n)}.$$
As we already showed in the proof of Proposition \ref{5}, $F(f_{\eta}) \to 0$ as $\eta \to 0$, which means (by making $\delta$ close to $0$) that we can actually lower bound the maximal number of $k$-chains of subpartitions by $e^{k\sqrt{2n}M(F) - o(\sqrt{n})}$, as $n \to \infty$. This already proves Theorem \ref{1.2}. We remark here that the proof does \textit{not} rely on whether or not the $k$-chains are strictly ordered or not, so the statement of Theorem \ref{1.2} does not depend on this interpretation.
\\
\\
Unfortunately our proof makes it clear that we cannot easily generalize to the case of $k(n)$-chains, i.e., where $k$ grows to $+\infty$ with $n$. As stated in the introduction, we actually expect that if $k(n)$ grows slowly enough (at a rate of $o(n^{1/2})$) then one has the same limit shape. One the other hand if $n^{1/2} = o(k(n))$, then we expect the limit to be the LSVK curve \cite{LS77,VK77}. We expect a nontrivial crossover when $k(n) \sim \alpha n^{1/2}$, because this is precisely the minimal growth rate at which the typical ensemble of sub-paths no longer has a tendency to just concentrate near the boundary of the partition, but actually distributes itself throughout the bulk of the partition according to some density (as can be shown via a random matrix argument, or alternatively using variational principles for domino tilings). This may or may not be pursued in a future work, but we believe that a similar overall approach will work.

\end{document}